\newcommand{\R}{{\mathbb{R}}}
\newtheorem{definition}{Definition}
\newtheorem{theorem}{Theorem}
\newtheorem{lemma}{Lemma}
\newtheorem{proposition}{Proposition}
\newtheorem{remark}{Remark}
\newtheorem{example}{Example}
\begin{document}
\title[]{Nonuniform Almost Reducibility of Nonautonomous Linear Differential Equations }
\author[\'A. Casta\~neda]{\'Alvaro Casta\~neda}
\author[I. Huerta]{Ignacio Huerta}
\address[\'A. Casta\~neda]{Departamento de Matem\'aticas, Facultad de Ciencias, Universidad de
 Chile, Casilla 653, Santiago, Chile}
\address[I. Huerta]{Departamento de Matem\'atica y C.C, Facultad de Ciencia, Universidad de Santiago de Chile
Casilla 307, Correo 2, Santiago, Chile}
\email{castaneda@uchile.cl, ignacio.huerta@usach.cl}
\thanks{This research has been partially supported  FONDECYT Regular 1170968. The second author is also supported by (CONICYT-PCHA/2015-21150270)}
\subjclass[2010]{34D09, 37D25, 37B55}
\keywords{Nonuniform dichotomy spectrum, Nonuniform almost reducibility, Nonuniform exponential dichotomy. }
\date{\today}

\maketitle
\begin{abstract}
We prove that a linear nonautonomous differential system with nonuniform hyperbolicity on the half line  can be written  as diagonal system with a perturbation which is small enough. Moreover we show that the diagonal terms are contained in the nonuniform exponential dichotomy spectrum. For this purpose we
introduce the concepts of \textit{nonuniform almost reducibility} and \textit{nonuniform contractibility}  which are generalization of these notions originally defined in a uniform context.
\end{abstract}

\section{Introduction}

Given a linear operator $T$, to find an ordered basis in which $T$ assumes an especially simple form is a classic problem in linear algebra. When we are working in finite dimensional spaces, this problem has
a strong relation with the study of the dynamics of a linear differential equation
\begin{equation}
\label{lin}
\dot{x} = A(t)x,
\end{equation}
where $x \in \mathbb{R}^n$ and $t\in J$ with $J\subseteq \R$ an interval.
\subsection{Autonomous and Nonautonomous contexts}
When $A(t) = A,$ the know\-led\-ge of the real part of eigenvalues of $A$ allows us to construct the stable and unstable invariant manifolds and the resulting canonical form of $A$ gives some insights into the form of the solutions of (\ref{lin}).

In a nonautonomous context, the problem of finding a simpler form of the matrix $A(t)$ and as a consequence to study the qualitative behavior of (\ref{lin}) is a more delicate task. In fact, contrarily to the autonomous case, the eigenvalues analysis does not always allow any conclusion over the stability of the solutions, and thus an alternative focusing must be considered.

The first approach in this direction was given by G. Floquet \cite{Floquet}, which established that a periodic system can be transformed into a constant coefficients system. Floquet's result can be seen
as an example of the properties of \emph{kinematical si\-milarity} and \emph{reducibility}, which refers that a linear system (\ref{lin}) can be transformed into
\begin{equation}
\label{reducido}
\dot{y}=B(t)y
\end{equation}
through a coordinate change $x = L(t)y$, where the invertible matrix function $L(t)$ is known as Lyapunov transformation.

The problem to obtain a simpler form of (\ref{lin}) has been tackled by using the concept of {\emph{reducibility} by O. Perron in \cite{Perron}, which proves that (\ref{lin}) can be reduced via unitary transformation
to a system (\ref{reducido}) where $B(t)$ has a triangular form whose diagonal coefficients are real. Moreover, under subtle technical conditions it can be proved that $B(t)$ has a block--triangular form consisting
of blocks whose diagonal coefficients are real.

We have mentioned that an eigenvalues--based approach has several shortcomings and is not an adequate tool to cope with stability issues in the nonautonomous framework. A tool that emulates the role of the eigenvalues in this context was developed in terms of the property of {\emph{uniform exponential dichotomy}} (a type of nonautonomous hyperbolicity), namely, the Sacker--Sell spectrum associated to (\ref{lin}), which is the set
$$\sigma(A) =  \{\lambda \in \mathbb{R} \colon \dot{x} = (A(t) - \lambda I)x \, \, \textnormal{has no uniform exponential dichotomy on $J\subset \mathbb{R}$}  \}.$$

This spectrum plays a fundamental role in a better localization of diagonal terms when the system (\ref{lin}) can be transformed to a diagonal one. In fact, B. F. Bylov in \cite{Bylov} introduced the notion of {\emph{almost reducibility}}, \textit{i.e.}, reducibility with a negligible error and proved that any linear system is almost reducible to some diagonal system with real coefficients. Later, F. Lin in \cite{Lin} improves the Bylov's result by proving that the diagonal coefficients are contained in the Sacker--Sell Spectrum. Moreover, F. Lin proved that this spectrum is the minimal compact set where the diagonal terms belong to, this phenomenon is known as the {\emph{contractibility}} of a linear system.

We emphasize that these concepts of reducibility and almost reducibility also have a vast literature as well as in the uniform hyperbolicity (\cite{Coppel}, \cite{Johnson}) or in Schr\"{o}dinger operators
\cite{Yu}.

\subsection{Structure and novelty of the article} The section 2 introduces the concepts of \emph{nonuniformly almost reducible} and \emph{nonuniformly contractible} systems, both notions
are the generalizations of the ideas of almost reducibility and contractibility previously mentioned. Instead of using the set $\sigma(A)$, we use the spectrum of the \emph{nonuniform exponential dichotomy}
introduced in \cite{Chu, Zhang}. The main result of section 3 states that
if the linear system (\ref{lin}) verifies a subtle condition of nonuniform hyperbolicity on $J=\mathbb{R}_{0}^{+}$,  then this system is uniformly contracted to the spectrum of nonuniform exponential dichotomy (a formal definition will be given later).  The section 4 deals with preparatory Lemmas to obtain the main result, which is proved in the section 5. Finally, in section 6, we give an scalar and planar applications of our principal theorem.

Our main result is a generalization of the Lin's work in \cite{Lin}. In spite that our proof follows the lines of that article, it is worth to stressing that, compared with the uniform case, the nonuniform behavior of the solutions of (\ref{lin}) combined with our restriction to $J=\mathbb{R}_0^{+}$ arises technical subtleties and bulky technicalities (namely conditions \textnormal{\textbf{{(C1)--(C4)}}} in the section 5) in order to obtain the desired result, which deserve interest on itself.

\section{Preliminaries}
We consider the linear system (\ref{lin})
with $x$ as a column vector of $\R^{n}$ and the matrix function $t \mapsto A(t) \in \R^{n\times n}$ with the following properties:
\begin{itemize}
\item [\textbf{(P1)}] There exists a couple of positive numbers $\mu, \mathcal{M} > 0,$ such that $$\left \|A(t)\right \|\leq \mathcal{M} \exp(\mu t) $$  for any $t\in\R_0^{+}.$ 
\vspace{0.3 cm}
\item [\textbf{(P2)}] The evolution operator $\Phi (t,s)$ of (\ref{lin}) has a nonuniformly bounded growth (\cite{Zhang}); namely, there exist constants $K_0\geq1$, $a\geq0$ and $\bar{\varepsilon}\geq0$ such that $$\left \| \Phi (t,s) \right \| \leq K_0\exp(a\left | t-s \right | + \bar{\varepsilon}s), \quad t,\ s \in\R_0^{+},$$
\end{itemize}
where $\left \| \cdot\right \|$ denotes a matrix norm and

\begin{displaymath}
x(t) = \Phi(t,s) x(s), \quad \quad \Phi(t,s) \Phi(s, \tau) = \Phi(t, \tau), \quad \text{for \, \, all} \quad t,s,\tau \in \R_{0}^+.
\end{displaymath}

The purpose of this article is to study the nonuniform contractibility or nonuniform almost reducibility to a diagonal system.
Namely, the $\delta$-nonuniform kinema\-tical similarity of (\ref{lin}) to
\begin{equation}
\label{perturbado}
\dot{y}=U(t)y,
\end{equation}
where $U(t)=C(t)+B(t)$; $C(t)$ is a diagonal matrix and $B(t)$ has smallness properties which will be explained later.

\begin{definition}\textnormal{(\cite{Zhang})}
The system (\ref{lin}) is nonuniformly kinematically similar (resp. $\delta-$nonuniformly kinematically similar with a fixed $\delta>0$) to (\ref{perturbado}) if there exist an invertible transformation $S(t)$ (resp. $S_{\delta}(t)$) and $\upsilon\geq0$ satisfying
$$\left \|S(t)\right \|\leq M_{\upsilon}\exp(\upsilon t) \quad and \quad \left \|S^{-1}(t)\right \|\leq M_{\upsilon}\exp(\upsilon t)$$
or respectively
$$\left \|S(\delta, t)\right \|\leq M_{\upsilon, \delta}\exp(\upsilon t) \quad and \quad \left \|S^{-1}(\delta, t)\right \|\leq M_{\upsilon, \delta}\exp(\upsilon t),$$
such that the change of coordinates $y(t)=S^{-1}(t)x(t)$ (resp. $y(t)=S_{\delta}^{-1}(t)x(t)$) transforms (\ref{lin}) into (\ref{perturbado}), where

\begin{equation}
\label{scnu}
U(t)=S^{-1}(t)A(t)S(t)-S^{-1}(t)\dot{S}(t),
\end{equation}
for any $t \in\R_0^{+}$.
\end{definition}

\begin{remark}
The nonuniform kinematical similarity preserves nonuniformly growth bounded. In fact, if (\ref{lin}) and (\ref{perturbado}) are nonuniform kinematically similar through of the function $S(\cdot)$ and their respective evolution operators are $\Phi_1(t,s)$ and $\Phi_2(t,s)$, then by lemma 3.1 in \cite{Zhang} we have the invariance property
$$\Phi_1(t,s)S(s)=S(t)\Phi_2(t,s) \quad \text{for} \quad \text{all} \quad t, s \in\R_0^{+},$$
and if $\left \| \Phi_1 (t,s) \right \| \leq K_0\exp(a\left | t-s \right | + \bar{\varepsilon}\left | s \right |)$, then we verify that
$$
\begin{array}{rl}
\left \| \Phi_2 (t,s) \right \| &\leq \left \| S^{-1}(t) \right \| \left \| S(s) \right \| \left \| \Phi_1 (t,s) \right \|,\\
&\leq M_{\upsilon}\exp(\upsilon t)M_{\upsilon}\exp(\upsilon s)K_0\exp(a|t-s|+\bar{\varepsilon} s),
\end{array}
$$
and finally, we obtain that
$$\left \| \Phi_2 (t,s) \right \| \leq M_{\upsilon}^{2}K_0\exp((\upsilon+a)|t-s|+(2\upsilon+\bar{\varepsilon})s).$$
\end{remark}

As we said previously, the concept of almost reducibility was introduced in the 60's by B. F. Bylov in the continuous context. A discrete version of this notion was given by \'A. Casta\~neda and  G. Robledo (see \cite{CR2017}).

 Now we introduce the
definition of \textit{nonuniformly almost reducible}, which is a version of the previous concept in the nonuniform framework.

\begin{definition}
The system (\ref{lin}) is nonunifomly almost reducible to
$$\dot{y}=C(t)y,$$
if for any $\delta>0$ and $\varepsilon\geq0$, there exists a constant $K_{\delta,\varepsilon}\geq1$ such that (\ref{lin}) is $\delta-$nonuniformly kinematically similar to
$$\dot{y}=[C(t)+B(t)]y,\quad with \quad \left \|B(t)\right \|\leq \delta K_{\delta,\varepsilon}$$
for any $t\in\R_0^{+}$.
\end{definition}

\begin{remark}
\label{remarkpartenouniforme}

\noindent \begin{itemize}

\item[(i)] It is appropriate to say that the parameter $\varepsilon$ represents the nonuniform part in the definition of nonuniform exponential dichotomy of the linear system (\ref{lin}), as will be seen in the examples that will be presented at the end of this work.

\item [(ii)] In the case when $C(t)$ is a diagonal matrix, if $K_{\delta,\varepsilon}=1$, it is said that (\ref{lin}) is almost reducible to a diagonal system and it was proved by B.F. Bylov in \cite{Bylov} that any continuous linear system satisfies this property and the components of $C(t)$ are real numbers.
\end{itemize}
\end{remark}

The concept of almost reducibility to diagonal system was rediscovered and improved in the 90's by F. Lin in \cite{Lin}, who introduces the concept of contractibility in the continuous context, while in the discrete case was proposed by \'A. Casta\~neda and G. Robledo in \cite{CR2017}. In this paper we introduce its nonuniform version.

\begin{definition}
The system (\ref{lin}) is nonuniformly contracted to the compact subset $E\subset\R$ if is nonuniformly almost reducible to a diagonal system
$$\dot{y}=Diag(C_1(t),\dots, C_n(t))y,$$
where $C_i(t)\in E$, for any $t\in\R_0^{+}.$
\end{definition}

It is worth to emphasize that while Bylov's result only says that the diagonal components are real numbers, Lin's definition provides explicit localization properties,
as the fact that a compact set is contractible if it is the minimal compact set such that the system (\ref{lin}) can be contracted.

In the continuous and discrete cases, the concept of contractibility has been applied in some results of topological equivalence and almost topological equivalence respectively (see \cite{Lin2}, \cite{CR2018}). The major contribution of \cite{Lin} is to prove that the contractible set of a linear system (\ref{lin}) is its Sacker and Sell spectrum (see \cite{SS}). Mimicing the construction of the Sacker and Sell spectrum, S. Siegmund in \cite{Siegmund2002} define the nonuniform spectrum $\Sigma(A)$  (a formal definition will be given later). To the best of our knowledge, there exists no result in the nonuniform framework and the purpose of this article is to obtain condition for the nonuniform contractibility of (\ref{lin}) to $\Sigma(A)$ by following some lines of Lin's work in \cite{Lin}. 

\section{Main result: Nonuniform almost reducibility to diagonal systems and nonuniform spectrum.}

\subsection{Dichotomy and nonuniform spectrum.}

In this section we recall the concept of \textit{nonuniform exponential dichotomy} introduced by L. Barreira and C. Valls in \cite{BV-CMP} and its associated spectrum with some properties.

\begin{definition}\textnormal{(\cite{BV-CMP}, \cite{Chu}, \cite{Zhang})}
The system (\ref{lin}) has a nonuniform exponential dichotomy on $J\subset \R$ if there exist an invariant projector $P(\cdot)$, constants $K\geq 1$, $\alpha>0$ and $\varepsilon\geq 0$ such that
\begin{equation}
\begin{array}{rcl}
\left \| \Phi(t,s)P(s) \right \|&\leq& K\exp(-\alpha(t-s)+\varepsilon\left | s \right |),\quad t\geq s, \quad t, s \in J,                      \\
\left \| \Phi(t,s)(I-P(s)) \right \|&\leq& K\exp(\alpha(t-s)+\varepsilon\left | s \right |),\quad t\leq s, \quad t, s \in J.
\end{array}
\end{equation}
\end{definition}

\begin{remark} We have the following comments with respect to this nonuniform dichotomy:
\begin{enumerate}
\item In the definition of nonuniform exponential dichotomy the condition  $\varepsilon < \alpha$  appears in \cite{Chu} and \cite{Zhang}, allowing them to show their respective technical results. Our definition is inspired in the work of Y. Xia \textit{et al.} \cite{Xia2019}.
\item It is considered a projector $P(t)$ that satisfies the equation $$P(t)\Phi(t,s)=\Phi(t,s)P(s)$$ and it is invariant in the following sense $$dim(Ker(P(t)))=dim(Ker(P(s))),$$ for all $t, s\in J$.
\end{enumerate}
\end{remark}

\begin{definition}\textnormal{(\cite{Chu} ,\cite{Zhang})}
The nonuniform spectrum (also called nonuniform exponential dichotomy spectrum) of (\ref{lin}) is the set $\Sigma(A)$ of $\lambda\in\R$ such that the systems
 \begin{equation}
 \label{sistemaperturbado}
\dot{x}=[A(t)-\lambda I]x
 \end{equation}
 have not nonuniform exponential dichotomy on $\R_0^{+}.$
\end{definition}

\begin{remark}
The evolution operator of (\ref{sistemaperturbado}) is $\Phi_{\lambda}(t,s)=\exp(-\lambda (t-s))\Phi(t,s)$. Moreover, if $\lambda\notin\Sigma(A)$, then there exist constants $K\geq1$, $\alpha>0$, $\varepsilon\geq 0$ and an invariant projector $P(t)$ such that $\Phi_{\lambda}(t,s)$ satisfies the following estimations for $t,s \in J$:
\begin{equation}
\begin{array}{rcl}
\label{nouniforme}
\left \| \exp(-\lambda(t-s))\Phi(t,s)P(s) \right \|&\leq& K\exp(-\alpha(t-s)+\varepsilon\left | s \right |),\quad t\geq s,                \\
\left \| \exp(\lambda(t-s))\Phi(t,s)(I-P(s)) \right \|&\leq& K\exp(\alpha(t-s)+\varepsilon\left | s \right |),\quad t\leq s.
\end{array}
\end{equation}
\end{remark}

\begin{remark}
If $\lambda \notin \Sigma(A),$ then $\lambda$ belongs to the resolvent set of $A,$ which is denoted by $\rho(A).$
\end{remark}

The following result allows us to give a better description of the spectrum when the evolution operator of system (\ref{lin}) has a nonuniformly bounded growth.

\begin{proposition}\textnormal{(\cite{Half-line}, \cite{Kloeden}, \cite{Siegmund2002}, \cite{Xia2019}, \cite{Zhang})} If the evolution operator of (\ref{lin}) satisfies \textnormal{\textbf{(P2)}}, then its nonuniform spectrum $\Sigma(A)$ is the union of m compact intervals where $0<m\leq n$, namely,
\begin{equation}
\label{espectronouniforme}
\Sigma(A)=\bigcup_{i=1}^{m}[ a_i,b_i ],
\end{equation}
with $-\infty<a_1\leq b_1<\ldots<a_m\leq b_m <+\infty$.
\end{proposition}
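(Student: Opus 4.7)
The plan is to prove four properties of $\Sigma(A)$: closedness, boundedness, finiteness of connected components (at most $n$), and non-emptiness ($m\geq 1$). Each is the nonuniform analogue of a standard step in the Sacker--Sell spectral theorem, so the scaffolding is classical but the $\varepsilon|s|$--term from the nonuniform estimate must be controlled throughout.

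First I would show that $\rho(A)$ is open. Fix $\lambda_0\in\rho(A)$ with invariant projector $P(\cdot)$ and constants $K\geq 1,\,\alpha>0,\,\varepsilon\geq 0$ coming from the estimates in (\ref{nouniforme}). Using $\Phi_\lambda(t,s)=e^{-(\lambda-\lambda_0)(t-s)}\Phi_{\lambda_0}(t,s)$, for any $\lambda$ with $|\lambda-\lambda_0|<\alpha$ the same projector $P(\cdot)$ witnesses a nonuniform exponential dichotomy for $\dot{x}=(A(t)-\lambda I)x$ with rate $\alpha-|\lambda-\lambda_0|>0$ and the same $\varepsilon$. Hence $(\lambda_0-\alpha,\lambda_0+\alpha)\subset\rho(A)$ and $\Sigma(A)$ is closed in $\R$.

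Next I would use \textbf{(P2)} to bound $\Sigma(A)$. For $\lambda>a$ the estimate $\|\Phi(t,s)\|\leq K_0\exp(a(t-s)+\bar{\varepsilon}s)$ valid for $t\geq s$ gives
\[
\|\Phi_\lambda(t,s)\|\leq K_0\exp(-(\lambda-a)(t-s)+\bar{\varepsilon}s),
\]
which is a dichotomy estimate with projector $P\equiv I$, constants $K=K_0,\,\alpha=\lambda-a,\,\varepsilon=\bar{\varepsilon}$; the complementary inequality is trivial since $I-P=0$. Symmetrically, for $\lambda<-a$ the estimate for $t\leq s$ gives a dichotomy with $P\equiv 0$. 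Therefore $\Sigma(A)\subset[-a,a]$, and combined with closedness this set is compact. Combining with openness of $\rho(A)$ yields the representation $\Sigma(A)=\bigcup_i[a_i,b_i]$ with finite or infinite $i$.

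To bound the number of components by $n$, I would exploit that on each connected component of $\rho(A)$ the rank of the invariant stable projector $P_\lambda(0)$ is locally constant (by the perturbation argument of the first step, the projector can be chosen the same on a neighborhood of each $\lambda_0\in\rho(A)$), hence constant on the whole component. Moreover the rank is strictly monotone in $\lambda$: if $\lambda_1<\lambda_2$ lie in different components then a simple invariant--subspaces comparison (directions decaying for $A-\lambda_2 I$ also decay for $A-\lambda_1 I$ up to the nonuniform factor, which is absorbed by the $\varepsilon|s|$ term) shows $\mathrm{rank}(P_{\lambda_1})<\mathrm{rank}(P_{\lambda_2})$. Since the rank takes values in $\{0,1,\ldots,n\}$ and equals $0$ on $(-\infty,-a)$ and $n$ on $(a,+\infty)$, the resolvent has at least two and at most $n+1$ unbounded--or--bounded components, so $1\leq m\leq n$. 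I expect the strict monotonicity of the rank across a spectral interval to be the main technical obstacle, because the natural proof in the uniform Sacker--Sell setting relies on uniform estimates that here must be rewritten to tolerate the nonuniform factor $e^{\varepsilon|s|}$; conditions \textbf{(P1)--(P2)} are precisely what is needed to absorb these factors so that existence of a strictly larger stable direction for larger $\lambda$ forces a genuine jump in rank. As this proposition is stated as a compilation of results from \cite{Half-line,Kloeden,Siegmund2002,Xia2019,Zhang}, I would ultimately cite those references rather than reproving every detail.
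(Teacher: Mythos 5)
The paper does not actually prove this proposition; it records it as a known result with citations, and the only structural information it gives is in Remark \ref{propiedadesproyector}, which lists (without proof) the constancy and monotonicity of the projector rank across the components of $\rho(A)$. Your reconstruction follows exactly the Sacker--Sell scaffolding that those references use: (i) openness of $\rho(A)$ by scaling $\Phi_{\lambda}=e^{-(\lambda-\lambda_0)(t-s)}\Phi_{\lambda_0}$ and absorbing $|\lambda-\lambda_0|$ into the rate while keeping the same projector and nonuniform exponent $\varepsilon$; (ii) boundedness of $\Sigma(A)$ directly from \textbf{(P2)} with $P\equiv I$ on the right end and $P\equiv 0$ on the left; (iii) finiteness and non-emptiness via the integer-valued, locally constant, and strictly increasing rank of the invariant projector. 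That is the same route, and you correctly identify the strict monotonicity of the rank as the one genuinely technical ingredient and defer to the cited sources for it.

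One inaccuracy worth flagging: you wrote that ``directions decaying for $A-\lambda_2 I$ also decay for $A-\lambda_1 I$'' when $\lambda_1<\lambda_2$, but the implication runs in the opposite direction. Since $\Phi_{\lambda}(t,s)=e^{-\lambda(t-s)}\Phi(t,s)$, increasing $\lambda$ only strengthens forward decay, so a solution decaying for $\lambda_1$ also decays for $\lambda_2>\lambda_1$. Hence the stable fibre for $\lambda_1$ is contained in the stable fibre for $\lambda_2$, which is what yields $\mathrm{rank}(P_{\lambda_1})\leq\mathrm{rank}(P_{\lambda_2})$; strictness when a spectral interval separates $\lambda_1$ and $\lambda_2$ is the detail the references supply. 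Your stated conclusion is correct and the slip is only in the parenthetical justification, so this is a wording error rather than a structural gap, but as written that sentence would give the reversed inclusion and the opposite inequality.
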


\begin{remark}
Notice that in \cite[Theorem 5]{Half-line} this result is done in the discrete framework with $J = \mathbb{N}.$ While that the rest of references are immersed in the continuous context. For more details see \cite[Theorem 5.12]{Kloeden}, \cite[Theorem 3.1]{Siegmund2002}, \cite[Corollary 1.8]{Xia2019} and \cite[Theorem 1.2]{Zhang}.
\end{remark}

The following result allows characterizing the nonuniformly bounded growth of the evolution operator associated to (\ref{lin}) from subtle hypothesis about its nonuniform spectrum.

\begin{proposition}
Suppose that  the system (\ref{lin}) has spectrum  $\Sigma(A)=[a,b]$, then its evolution operator $\Phi(t,s)$ satisfies \textnormal{\textbf{(P2)}}.
\end{proposition}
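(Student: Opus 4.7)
The plan is to exploit the fact that the spectrum $\Sigma(A)=[a,b]$ is bounded: any $\lambda>b$ and any $\mu<a$ lie in the resolvent set $\rho(A)$, so by the previous remark the shifted systems $\dot{x}=(A(t)-\lambda I)x$ and $\dot{x}=(A(t)-\mu I)x$ both admit a nonuniform exponential dichotomy on $\mathbb{R}_0^{+}$. I will select one $\lambda>b$ and one $\mu<a$, extract the corresponding dichotomy estimates, multiply them back by the exponential shifts, and match the two resulting bounds into a single estimate of the form \textbf{(P2)}.

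First I fix $\lambda>b$. Since $\Sigma(A)$ lies entirely to the left of $\lambda$, the nonuniform spectral theory forces the associated invariant projector to be $P_{\lambda}(t)\equiv I$ (the whole phase space behaves as the stable bundle for the shifted system). Hence there are constants $K\geq 1$, $\alpha>0$, $\varepsilon\geq 0$ with
\begin{equation*}
\|e^{-\lambda(t-s)}\Phi(t,s)\|\leq K\exp(-\alpha(t-s)+\varepsilon s), \qquad t\geq s\geq 0,
\end{equation*}
which yields $\|\Phi(t,s)\|\leq K\exp((\lambda-\alpha)(t-s)+\varepsilon s)$ for $t\geq s$. Setting $\widetilde{a}_1:=\max\{0,\lambda-\alpha\}$, this reads $\|\Phi(t,s)\|\leq K\exp(\widetilde{a}_1 |t-s|+\varepsilon s)$.

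Next I fix $\mu<a$. Symmetrically, all of $\Sigma(A)$ lies to the right of $\mu$, and the spectral decomposition forces the projector for the $\mu$-shifted system to be $P_{\mu}(t)\equiv 0$, so $I-P_{\mu}=I$. The second inequality in the remark gives constants $K'\geq 1$, $\alpha'>0$, $\varepsilon'\geq 0$ such that
\begin{equation*}
\|e^{-\mu(t-s)}\Phi(t,s)\|\leq K'\exp(\alpha'(t-s)+\varepsilon' s), \qquad 0\leq t\leq s,
\end{equation*}
hence $\|\Phi(t,s)\|\leq K'\exp((\mu+\alpha')(t-s)+\varepsilon' s)$. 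Because $t-s\leq 0$ here, this bound is dominated by $K'\exp(|\mu+\alpha'|\,|t-s|+\varepsilon' s)$. Putting $K_0:=\max\{K,K'\}$, $\widetilde{a}:=\max\{\widetilde{a}_1,|\mu+\alpha'|\}$, $\bar{\varepsilon}:=\max\{\varepsilon,\varepsilon'\}$ combines the two cases into $\|\Phi(t,s)\|\leq K_0\exp(\widetilde{a}|t-s|+\bar{\varepsilon}s)$ for all $t,s\in\mathbb{R}_0^{+}$, which is exactly \textbf{(P2)}.

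The substantive step, and the one I expect to be the main obstacle, is justifying that $P_{\lambda}\equiv I$ for $\lambda>b$ and $P_{\mu}\equiv 0$ for $\mu<a$. This is the nonuniform counterpart of a well-known fact for the Sacker--Sell spectrum: the rank of the dichotomy projector is constant on each connected component of $\rho(A)$, equals $n$ on the unbounded component above $\sup\Sigma(A)$, and equals $0$ on the unbounded component below $\inf\Sigma(A)$. In the present setting this is available from the structural results behind the cited spectral theorem (Siegmund, Zhang, Chu), which one would either invoke directly or, if a standalone argument is preferred, recover by observing that for $\lambda$ large enough both shifted estimates must hold with full and zero projector respectively, and then transporting this rank along the corresponding connected component of $\rho(A)$ via the continuity/invariance properties of the dichotomy projectors established in the aforementioned references.
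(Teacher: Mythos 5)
Your proof is correct and follows essentially the same route as the paper: pick one point of the resolvent set above the spectrum (projector $I$) and one below (projector $0$), read off the two one-sided dichotomy estimates for $\Phi$, and merge them by taking maxima of the constants into a single bound of the form \textbf{(P2)}. The only cosmetic difference is that you fold the sign bookkeeping into $|\mu+\alpha'|$ rather than into a $\max\{0,-\mu-\alpha',\ldots\}$ expression as the paper does, and you flag explicitly (correctly) that the substantive input is the rank-$n$/rank-$0$ dictum for the projector on the two unbounded components of $\rho(A)$, which the paper invokes at the same point.
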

\begin{proof}
Let $\gamma, \lambda \in \rho(A)$ such that $\gamma < a\leq b<\lambda$, then we have that the system $$\dot{x}=(A(t)-\gamma I)x$$ has a nonuniform exponential dichotomy with projector $P(t)=0.$ On the other hand, the system $$\dot{x}=(A(t)-\lambda I)x$$ has a nonuniform exponential dichotomy with projector $P(t)=I.$

Therefore there exist $\alpha_1, \alpha_2, >0$, $\varepsilon_1, \varepsilon_2 \geq 0$, $K_1, K_2\geq1$ such that satisfies
$$\left \| \Phi(t,s) \right \|\leq K_1 \exp((\gamma+\alpha_1)(t-s)+\varepsilon_1 s) \quad (t\leq s),$$
$$\left \| \Phi(t,s) \right \|\leq K_2 \exp((\lambda-\alpha_2)(t-s)+\varepsilon_2 s) \quad (t\geq s).$$
Now we define $a=\max\left\{ 0,-\gamma-\alpha_1,\lambda-\alpha_2 \right\}$, $\varepsilon=\max\left\{ \varepsilon_1, \varepsilon_2\right\}$ and $K=\max\left\{K_1, K_2 \right\}$ which allow to conclude that
$$\left \| \Phi(t,s) \right \|\leq K \exp(a|t-s|+\varepsilon s) \quad (t,s\in \R_0^{+}).$$
\end{proof}

An important property of the spectrum is its decompose when the system is diagonal, as we can see in the following lemma. We point out that this result will be useful in the last section when we give planar examples of our main result.

\begin{lemma}
\label{lemaespectrodiagonal}
Consider the system \textnormal{(\ref{perturbado})} where $U$ is written as 
\begin{equation}
\label{Benbloques}
U(t)=\begin{pmatrix}
U_1(t) & 0\\
0 & U_2(t)
\end{pmatrix}
\end{equation}
where $U_1:\R_0^{+}\rightarrow M_{n_1}(\R), U_{2}:\R_0^{+}\rightarrow M_{n_2}(\R)$,
therefore the spectra satisfy 
\begin{equation}
\label{espectrodesistemaenbloques}
\Sigma(U)=\Sigma(U_1)\cup\Sigma(U_2).
\end{equation}
\end{lemma}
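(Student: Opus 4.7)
The key observation is that the evolution operator of the block-diagonal system (\ref{perturbado}) is itself block-diagonal: writing $\Phi_i(t,s)$ for the evolution operator of $\dot{y}_i=U_i(t)y_i$, one has $\Phi(t,s)=\mathrm{diag}(\Phi_1(t,s),\Phi_2(t,s))$, so the $\lambda$-perturbed operators $\Phi_\lambda(t,s)=e^{-\lambda(t-s)}\Phi(t,s)$ and $\Phi_{i,\lambda}(t,s)=e^{-\lambda(t-s)}\Phi_i(t,s)$ inherit the same block structure. My plan is to prove (\ref{espectrodesistemaenbloques}) by establishing the two inclusions separately, both via the contrapositive.

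For $\Sigma(U)\subseteq\Sigma(U_1)\cup\Sigma(U_2)$ I would assume $\lambda\in\rho(U_1)\cap\rho(U_2)$ and pick invariant projectors $P_i(\cdot)$ with dichotomy constants $K_i,\alpha_i,\varepsilon_i$ for each subsystem. The block-diagonal projector $P(t):=\mathrm{diag}(P_1(t),P_2(t))$, together with $K=\max\{K_1,K_2\}$, $\alpha=\min\{\alpha_1,\alpha_2\}$, and $\varepsilon=\max\{\varepsilon_1,\varepsilon_2\}$, is then checked to give a nonuniform exponential dichotomy for $\Phi_\lambda$. This direction reduces to substituting the block form into the two dichotomy inequalities and is routine.

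The reverse inclusion $\Sigma(U_1)\cup\Sigma(U_2)\subseteq\Sigma(U)$ is the main content. Starting from $\lambda\in\rho(U)$ with invariant projector $P(\cdot)$ and constants $K,\alpha,\varepsilon$, the central step is to show that the stable bundle $E^s(s):=\mathrm{Im}\,P(s)$ and its complement $E^u(s):=\mathrm{Im}(I-P(s))$ are both compatible with the splitting $\R^n=\R^{n_1}\oplus\R^{n_2}$, in the sense that $E^s(s)=V^s_1(s)\oplus V^s_2(s)$ and $E^u(s)=V^u_1(s)\oplus V^u_2(s)$ with $V^\bullet_i(s)\subset\R^{n_i}$. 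The idea is that for any $\xi=(\xi_1,\xi_2)\in E^s(s)$, the block-diagonal form of $\Phi_\lambda$ forces each of $(\xi_1,0)$ and $(0,\xi_2)$ to inherit the same forward decay, so both must also lie in $E^s(s)$; running the analogous argument backward in time on $[0,s]$ decomposes $E^u(s)$. From $\R^n=E^s(s)\oplus E^u(s)$ and these refinements one deduces $\R^{n_i}=V^s_i(s)\oplus V^u_i(s)$, and the projector $P_i(t)$ onto $V^s_i(t)$ along $V^u_i(t)$ is invariant for $\Phi_{i,\lambda}$ and inherits the dichotomy estimates with the same constants, yielding $\lambda\in\rho(U_i)$ for $i=1,2$.

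The main technical obstacle is the intrinsic characterization of the bundles that underpins the splitting argument: one must identify $\mathrm{Im}\,P(s)$ with the set of initial data whose forward orbit under $\Phi_\lambda$ satisfies the dichotomy decay, and similarly for $\mathrm{Im}(I-P(s))$ backward from $s$ down to $0$. One inclusion is immediate from the dichotomy inequalities, while the reverse requires a matching exponential lower bound of the form $\|\Phi_\lambda(t,s)(I-P(s))\xi\|\geq K^{-1}e^{\alpha(t-s)-\varepsilon t}\|(I-P(s))\xi\|$ for $t\geq s$, obtained by inverting the evolution along the unstable fiber and invoking the backward estimate on $[s,t]$. This step is where the nonuniform factor $e^{\varepsilon s}$ and the restriction to the half-line $\R_0^+$ demand the most care.
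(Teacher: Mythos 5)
Your first inclusion ($\Sigma(U)\subseteq\Sigma(U_1)\cup\Sigma(U_2)$) matches the paper's proof exactly: assemble $P=\mathrm{diag}(P_1,P_2)$, take $K=\max K_i$, $\alpha=\min\alpha_i$, $\varepsilon=\max\varepsilon_i$, and read off the estimates. For the reverse inclusion you are doing considerably more work than the paper itself: there, after noting that $\Phi_\lambda$ is automatically block-diagonal, the authors simply assert that ``we can write $\Phi_\lambda(t,s)$ and $P(t)$ as in (\ref{diag})'' --- i.e.\ they take for granted that a block-diagonal projector exists --- and then read off the sub-block estimates. You correctly spot that this assertion is where the actual content lies and try to justify it via an intrinsic characterization of the stable bundle, which is the right instinct, and the lower bound $\|\Phi_\lambda(t,s)(I-P(s))\xi\|\geq K^{-1}\exp(\alpha(t-s)-\varepsilon t)\|(I-P(s))\xi\|$ is the correct engine for it.

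Two cautions, both of which you partly anticipate. First, that lower bound separates $\mathrm{Im}\,P(s)$ from its complement only when $\alpha>\varepsilon$; the paper's Definition~4 (following Xia et al.) does not impose this, and without it the ``forward-decaying $\Leftrightarrow$ stable'' characterization on which your whole block-decomposition of $E^s(s)$ rests can fail. Second, on $\R_0^{+}$ only $\mathrm{Im}\,P(s)$ and $\mathrm{rank}\,P(s)$ are canonical; the unstable fibre $\mathrm{Im}(I-P(s))$ is not uniquely determined by the dichotomy, so ``running the analogous argument backward on $[0,s]$'' cannot decompose it --- a compact time interval gives no asymptotic distinction. The remedy is simpler than what you sketch: once $\mathrm{Im}\,P(s)=V^s_1(s)\oplus V^s_2(s)$ is established, pick block-compatible complements $V^u_i(0)\subset\R^{n_i}$ of $V^s_i(0)$ at time $0$, transport them by $\Phi_{\lambda,i}(\cdot,0)$, define $P_i$ from these splittings, and verify the dichotomy estimates directly; do not try to recover the given $E^u(s)$. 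With that modification, and under $\alpha>\varepsilon$, your argument supplies the justification the paper omits.
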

\proof{If $\lambda\in(\rho(U_1)\cap\rho(U_2))$, then the systems 
$$\dot{x}_1=(U_1(t)-\lambda I)x_1$$
and 
$$\dot{x}_2=(U_2(t)-\lambda I)x_2$$
with evolution operators $\Phi_{\lambda,1}(t,s)$ and $\Phi_{\lambda,2}(t,s)$ respectively, have a nonuniform exponential dichotomy, i.e., there exist constants $K_1,K_2\geq1, \alpha_i>0, \mu_i\geq0$,
for $i\in\left \{ 1,2\right \}$ and invariant projectors $P_1:\R_0^{+}\rightarrow M_{n_1}(\R), P_2:\R_0^{+}\rightarrow M_{n_2}(\R)$ respectively, satisfying 
\begin{equation*}
\left \{
\begin{array}{rcl}
\left \| \Phi_{\lambda,i}(t,s)P_i(s) \right \|&\leq& K_i\exp(-\alpha_i(t-s)+\mu_i s),\quad t\geq s,\\
\left \| \Phi_{\lambda,i}(t,s)(I-P_i(s)) \right \|&\leq & K_i\exp(\alpha_i(t-s)+\mu_i s),\quad t\leq s. 
\end{array}
\right.
\end{equation*}

If we consider the evolution operator of system $\dot{x}=(U(t)-\lambda I)x$ and an invariant projection as follows
\begin{equation}
\label{diag}
\Phi_{\lambda}(t,s)=diag (\Phi_{\lambda,1}(t,s),\Phi_{\lambda,2}(t,s))\quad\textnormal{and}\quad P(t)=diag (P_1(t),P_2(t))
\end{equation}
and $K=\max \left \{ K_1,K_2\right \},\alpha=\min \left \{ \alpha_1,\alpha_2\right \}$ and $\mu=\max \left \{\mu_1,\mu_2\right \}$, then we have
\begin{equation}
\label{estimacion}
\left \{
\begin{array}{rcl}
\left \| \Phi_{\lambda}(t,s)P(s) \right \|&\leq& K\exp(-\alpha(t-s)+\mu s),\quad t\geq s,\\
\left \| \Phi_{\lambda}(t,s)(I-P(s)) \right \|&\leq & K\exp(\alpha(t-s)+\mu s),\quad t\leq s,
\end{array}
\right.
\end{equation}
therefore $\lambda\in\rho(U)$. 

If $\lambda\in\rho(U)$, then the system $\dot{x}=(U(t)-\lambda I)x$ admits a nonuniform exponential dichotomy, which implies that it is satisfied (\ref{estimacion}). We can write $\Phi_{\lambda}(t,s)$ and $P(t)$ as in (\ref{diag}) and we can get the following estimates:
$$   
\begin{array}{rcl}
\left \| \Phi_{\lambda,i}(t,s)P_i(s) \right \|\leq&\left \| \Phi_{\lambda}(t,s)P(s)\right \|&\leq K\exp(-\alpha(t-s)+\mu s),\\
\left \| \Phi_{\lambda,i}(t,s)(I-P_i(s)) \right \|\leq&\left \| \Phi_{\lambda}(t,s)(I-P(s))\right \|&\leq  K\exp(\alpha(t-s)+\mu s),
\end{array}
$$
for $t\geq s$ and $t\leq s$ respectively,
which implies that $\lambda\in(\rho(U_1)\cap\rho(U_2))$.
}

\qed

\begin{remark}
\label{propiedadesproyector}
If $\lambda\notin\Sigma(A)$, it follows from Definition 5 that (\ref{sistemaperturbado}) has a nonuniform exponential dichotomy on $\R_0^{+}$ with projector $P_{\lambda}$. Nevertheless, it is interesting to note that:
\begin{itemize}
\item [a)] $Rank(P_{\lambda})$ is constant for any $\lambda\in(b_{i-1},a_i)$ ($i \in \lbrace 1, \dots , m \rbrace).$
\item [b)] If $\lambda_i \in(b_{i-1},a_i)$ and $\lambda_{i+1} \in (b_i,a_{i+1})$, then $Rank(P_{\lambda_i})<Rank(P_{\lambda_{i+1}}).$
\item [c)] $Rank(P_{\lambda})=0$ for any $\lambda\in(-\infty,a_i)$ and $Rank(P_{\lambda})=n$ for any
\newline $\lambda\in(b_m,+\infty).$
\end{itemize}
\end{remark}

\subsection{Main result.}

The main goal of this article is to prove the following result.
\begin{theorem}
If \textnormal{\textbf{(P1)}-\textbf{(P2)}} are satisfied, then (\ref{lin}) is nonuniformly contracted to $\Sigma(A)$.
\end{theorem}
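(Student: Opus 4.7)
The plan is to follow Lin's strategy \cite{Lin} for the uniform case, adapted to the nonuniform setting by carefully tracking the exponential weights $e^{\varepsilon s}$ that appear in every dichotomy estimate.

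The first stage is a spectral block decomposition. By Proposition 3 one writes $\Sigma(A)=\bigcup_{i=1}^{m}[a_i,b_i]$ and picks $\lambda_i\in(b_{i-1},a_i)$ in each spectral gap. Each $\lambda_i$ lies in $\rho(A)$, so $\dot{x}=(A(t)-\lambda_i I)x$ admits a nonuniform exponential dichotomy with projector $P_{\lambda_i}$ whose rank jumps from $0$ to $n$ as $i$ increases (Remark \ref{propiedadesproyector}). The invariant subspaces furnished by these projectors, assembled via a Lyapunov-type change of basis along a fundamental matrix, give a nonuniform kinematic similarity of (\ref{lin}) to a block-diagonal system $\dot{y}=\mathrm{diag}(A_1(t),\ldots,A_m(t))y$ with $\Sigma(A_i)=[a_i,b_i]$; by Remark 1 each block inherits property \textbf{(P2)}, and by Lemma \ref{lemaespectrodiagonal} the spectra add up correctly. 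Since a diagonal matrix built blockwise from diagonal blocks with entries $C_{i,j}(t)\in[a_i,b_i]$ automatically has diagonal in $\Sigma(A)$, it is enough to prove the theorem in the single-interval case $\Sigma(A)=[a,b]$.

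On such a single-interval block, I would carry out a nonuniform analogue of Perron's argument: a Gram--Schmidt construction along a fundamental matrix yields an orthogonal nonuniform Lyapunov transformation that brings the system to upper triangular form $\dot{y}=T(t)y$ with real diagonal entries $t_{ii}(t)$. Combining the triangular structure, property \textbf{(P2)}, and the nonuniform dichotomy estimates (\ref{nouniforme}), one can argue that the $t_{ii}(t)$ may be replaced by diagonal entries $c_i(t)\in[a,b]$ at the cost of collecting extra terms into the off-diagonal part $N(t)$.

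The final and decisive step is to wash out $N(t)$ in order to achieve the $\delta$-smallness required by the definition of nonuniform almost reducibility. Given $\delta>0$ and $\varepsilon\ge 0$, I would apply a diagonal scaling $L_{\delta,\varepsilon}(t)=\mathrm{diag}(e^{\rho_1(\delta,\varepsilon,t)},\ldots,e^{\rho_n(\delta,\varepsilon,t)})$ whose exponents $\rho_i$ are tuned so that the conjugated off-diagonal block has norm at most $\delta K_{\delta,\varepsilon}$, while $L_{\delta,\varepsilon}^{\pm 1}(t)$ remains bounded by $M_{\upsilon,\delta}\,e^{\upsilon t}$ and therefore qualifies as a $\delta$-nonuniform Lyapunov transformation in the sense of Definition 1. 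This balancing is where I expect the main obstacle: in the uniform setting the scalings cost only a fixed constant, but here the weights $e^{\varepsilon s}$ coming from \textbf{(P2)} and from the dichotomy estimates must be absorbed simultaneously, and the exponents $\rho_i$ must grow fast enough to kill $N(t)$ but slowly enough that $L_{\delta,\varepsilon}$ stays within admissible nonuniform growth. The conditions \textbf{(C1)}--\textbf{(C4)} announced in Section 5 are presumably the quantitative inequalities codifying this compatibility, and their verification is the technical heart of the proof.
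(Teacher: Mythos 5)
Your proposal follows essentially the same route as the paper's proof: spectral block decomposition via Propositions~1 and~4, QR reduction of each block to upper triangular form (Step~1), a diagonal conjugation $L_i(t)$ that shifts the diagonal entries $d_{rr}^{(i)}(t)$ into $[a_i,b_i]$ up to an error of size $M_\delta$ (Steps~2--3, built on an inductive crossing-time construction), and a further diagonal scaling $R_i(t)$ to compress the off-diagonal terms (Step~4). You correctly identify the conditions \textbf{(C1)}--\textbf{(C4)} as the quantitative heart governing the compatibility between the nonuniform weights $e^{\varepsilon s}$ and the admissible growth of the Lyapunov transformations.
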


In the next section, we will give the technical results which allow us to prove this Theorem.

\section{Preparatory results.}
The nonuniform kinematical similarity between (\ref{lin}) and (\ref{perturbado}) will be denoted by $A\cong U$. Let us recall that nonuniform kinematical similarity is an equivalence relation having several properties.
\begin{lemma}
If $A \cong B$, then $A-\lambda I \cong B-\lambda I$ for any $\lambda\in\R$.
\end{lemma}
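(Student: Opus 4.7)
The plan is to use the same Lyapunov transformation $S(t)$ that realizes the nonuniform kinematical similarity $A \cong B$ to also realize $A - \lambda I \cong B - \lambda I$. Since Definition 1 requires both an invertible map $S(t)$ satisfying the growth bounds $\|S(t)\|, \|S^{-1}(t)\| \leq M_\upsilon \exp(\upsilon t)$ and the relation (\ref{scnu}), and the growth bounds are independent of the coefficient matrix, the only thing to verify is the algebraic identity (\ref{scnu}) with $A, B$ replaced by $A - \lambda I, B - \lambda I$.

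First I would recall the hypothesis: from $A \cong B$ there exist $\upsilon \geq 0$, $M_\upsilon > 0$ and an invertible $S(t)$ with the required growth bounds such that
$$B(t) = S^{-1}(t) A(t) S(t) - S^{-1}(t) \dot{S}(t), \qquad t \in \R_0^+.$$
Then I would compute, using the same $S(t)$,
$$S^{-1}(t)(A(t) - \lambda I)S(t) - S^{-1}(t)\dot{S}(t) = S^{-1}(t) A(t) S(t) - \lambda\, S^{-1}(t) S(t) - S^{-1}(t)\dot{S}(t),$$
and observe that $S^{-1}(t) S(t) = I$, so this expression equals $B(t) - \lambda I$.

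Thus the change of variables $y(t) = S^{-1}(t) x(t)$ transforms $\dot{x} = (A(t) - \lambda I) x$ into $\dot{y} = (B(t) - \lambda I) y$, and the bounds on $S$ and $S^{-1}$ carry over unchanged (with the same constants $\upsilon$, $M_\upsilon$). Hence $A - \lambda I \cong B - \lambda I$. There is no real obstacle here: the lemma is essentially the observation that multiples of the identity commute with any $S(t)$ and contribute nothing to the derivative term $S^{-1}\dot{S}$.
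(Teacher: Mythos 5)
Your proof is correct and follows the same approach as the paper: use the same transformation $S(t)$ and check that adding $-\lambda I$ to $A$ simply shifts the transformed matrix by $-\lambda I$, with the growth bounds on $S$ and $S^{-1}$ unaffected. Your version merely spells out the cancellation $S^{-1}(t)S(t)=I$ a bit more explicitly than the paper does.
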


\begin{proof}
If $A\cong B$ by the transformation $y(t)=S^{-1}(t)x(t)$, then $S(t)$ satisfies
$$B(t)=S^{-1}(t)A(t)S(t)-S^{-1}(t)\dot{S}(t).$$
It is straightforward see that
$$(B(t)-\lambda I)=S^{-1}(t)(A(t)-\lambda I)S(t)-S^{-1}(t)\dot{S}(t),$$
then $A(t)-\lambda I \cong B(t)-\lambda I.$
\end{proof}

\begin{lemma}
If $A \cong B$, then $\Sigma(A) = \Sigma(B)$.
\end{lemma}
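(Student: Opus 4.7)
The plan is to show $\rho(A)=\rho(B)$, from which $\Sigma(A)=\Sigma(B)$ follows by taking complements in $\R$. Since nonuniform kinematical similarity is symmetric (if $A\cong B$ via $S(t)$, then $B\cong A$ via $S^{-1}(t)$), it suffices to prove the inclusion $\rho(A)\subseteq\rho(B)$. Fix $\lambda\in\rho(A)$. By the preceding lemma, $A-\lambda I\cong B-\lambda I$ through the same transformation $S(t)$, so by the invariance property recalled in Remark 1 the evolution operators of the shifted systems satisfy
\[
\Phi_{B,\lambda}(t,s)=S^{-1}(t)\Phi_{A,\lambda}(t,s)S(s).
\]
Let $P(t)$ be the invariant projector realizing the nonuniform exponential dichotomy of $\dot x=(A(t)-\lambda I)x$ with constants $K,\alpha,\varepsilon$, and define $Q(t):=S^{-1}(t)P(t)S(t)$. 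A short computation using the intertwining formula shows $Q(t)^2=Q(t)$ and $Q(t)\Phi_{B,\lambda}(t,s)=\Phi_{B,\lambda}(t,s)Q(s)$, so $Q$ is an invariant projector for the $B$-system; the invertibility of $S$ forces $\dim\ker Q(t)=\dim\ker P(t)$ to be constant in $t$.

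I would then transport the dichotomy bounds. Using $\|S^{\pm 1}(t)\|\le M_\upsilon e^{\upsilon t}$ and splitting $\upsilon t=\upsilon(t-s)+\upsilon s$, one obtains
\[
\|\Phi_{B,\lambda}(t,s)Q(s)\|\le M_\upsilon^{2} K\exp\bigl(-(\alpha-\upsilon)(t-s)+(\varepsilon+2\upsilon)s\bigr),\qquad t\ge s,
\]
and symmetrically $\|\Phi_{B,\lambda}(t,s)(I-Q(s))\|\le M_\upsilon^{2} K\exp\bigl((\alpha+\upsilon)(t-s)+(\varepsilon+2\upsilon)s\bigr)$ for $t\le s$. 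The unstable rate improves to $\alpha+\upsilon$, the stable rate degrades to $\alpha-\upsilon$, and the nonuniform exponent grows harmlessly from $\varepsilon$ to $\varepsilon+2\upsilon$ (admissible since the paper's definition does not impose $\varepsilon<\alpha$). This exhibits a nonuniform exponential dichotomy for $\dot y=(B(t)-\lambda I)y$, so $\lambda\in\rho(B)$; the reverse inclusion is identical, starting from $B\cong A$ via $S^{-1}(t)$.

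The main technical obstacle is the positivity of the stable rate $\alpha-\upsilon$: the nonuniform growth factor $e^{\upsilon t}$ in the transformation subtracts directly from the dichotomy exponent, so one genuinely needs $\alpha>\upsilon$ to produce a bona fide dichotomy for $B-\lambda I$. This is the price of operating outside the Lyapunov (uniform) framework; if $\alpha\le\upsilon$ one must invoke compatibility between the two rates or exploit extra freedom in the choice of $S(t)$, but the algebraic structure of the argument (intertwining, transported projector, rank invariance) goes through regardless, so once a positive effective rate is secured the inclusion $\rho(A)\subseteq\rho(B)$ closes and the lemma follows.
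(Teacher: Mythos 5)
Your proof takes essentially the same route as the paper's: shift to $A-\lambda I\cong B-\lambda I$, transport the projector by conjugation $Q(t)=S^{-1}(t)P(t)S(t)$, push the dichotomy estimates through $S$ and $S^{-1}$, and conclude that the transported system has a nonuniform exponential dichotomy with effective rate $\alpha-\upsilon$ and nonuniform exponent $\varepsilon+2\upsilon$, provided $\alpha>\upsilon$. The paper invokes exactly the same conditional ``if $\alpha>\upsilon$, then $\lambda\in\rho(B)$'' without further justification, so you have reproduced both the argument and its unresolved rate restriction; your verification that $Q$ is an invariant projector of constant rank is a small but welcome addition the paper leaves implicit.
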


\begin{proof}
Let $\lambda\in\rho(A)$ then the system $$\dot{x}=[A(t)-\lambda I]x $$ has a nonuniform exponential dichotomy on $\R_0^{+}$ with invariant projector P(t).

We define $\Phi_{B}(t,s)=S^{-1}(t)\Phi_{A}(t,s)S(t)$ which corresponds to the evolution operator associated to the system
 $$\dot{y}=[B(t)-\lambda I]y$$
and an invariant projector is $Q(t)=S^{-1}(t)P(t)S(t)$.

This fact combined with the submultiplicative property of norms and the estimates for $S$ and $S^{-1}$ allows to prove that if $t \geq s$ (the case $t\leq s$ can be proved similarly), then we deduce the estimation
\begin{equation*}
\begin{split}
\left\|\Phi_{B}(t,s)\exp(-\lambda(t-s))Q(s) \right \|
&\leq \left \| S^{-1}(t) \right \| \left \| \Phi_{A}(t,s)\exp(-\lambda(t-s))P(s) \right \| \left \| S(s) \right \| \\
&\leq M_{\upsilon}\exp(\upsilon t)K\exp(-\alpha(t-s)+\varepsilon s)M_{\upsilon}\exp(\upsilon s).
\end{split}
\end{equation*}

Finally, if $\alpha > \upsilon$, then $\lambda\in\rho(B)$.
In order to prove the other contention, we use the fact that $\cong$ is an equivalence relation.
\end{proof}

\begin{proposition}
If  $\Sigma(A)\subset [a,b]$ and $\lambda>b$ (resp. or $\lambda<a$) the system
$$\dot{x}=(A(t)-\lambda I)x$$
has a nonuniform exponential dichotomy with projector $P(t)=I$ (resp. with projector $P(t)=0$).
\end{proposition}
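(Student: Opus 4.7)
My plan is to split the argument into two steps: first use the spectral hypothesis to secure the existence of a nonuniform exponential dichotomy for $\dot{x}=(A(t)-\lambda I)x$, and then identify the associated invariant projector as trivial ($I$ on the right of $\Sigma(A)$, zero on the left).

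For the case $\lambda>b$, I first observe that $\Sigma(A)\subset[a,b]$ together with $\lambda>b$ places $\lambda$ in the resolvent set $\rho(A)$, so by the very definition of the nonuniform spectrum the shifted system admits a nonuniform exponential dichotomy on $\R_{0}^{+}$ with some invariant projector $P_\lambda(\cdot)$ and constants $K_\lambda,\alpha_\lambda,\varepsilon_\lambda$. It then suffices to show $P_\lambda(t)\equiv I$, equivalently $\operatorname{Rank} P_\lambda(t)=n$ for every $t\ge 0$. Invoking the decomposition $\Sigma(A)=\bigcup_{i=1}^{m}[a_i,b_i]$ guaranteed by the earlier proposition on the structure of $\Sigma(A)$, we have $b_m\le b<\lambda$, so $\lambda$ sits in the unbounded resolvent component $(b_m,+\infty)$. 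Remark~\ref{propiedadesproyector}(c) then yields $\operatorname{Rank} P_\lambda=n$, and since the identity is the unique rank-$n$ projector of $\R^n$ (any $P$ with $P^2=P$ and $\operatorname{Rank}P=n$ must satisfy $Pw=w$ on all of $\operatorname{Range}(P)=\R^n$), we conclude $P_\lambda(t)=I$ for every $t\ge 0$.

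The case $\lambda<a$ is entirely symmetric: the hypothesis places $\lambda$ in the leftmost resolvent component $(-\infty,a_1)$, so Remark~\ref{propiedadesproyector}(c) yields $\operatorname{Rank} P_\lambda=0$, and the unique rank-zero projector of $\R^n$ is $0$, giving $P_\lambda(t)\equiv 0$.

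The real content of the argument lies in the rank statement of Remark~\ref{propiedadesproyector}(c), and this is where the main difficulty would reside if one wished to avoid quoting the remark. A self-contained replacement would start by choosing $\lambda_0$ large enough that $\lambda_0>\tilde{a}$, where $\tilde{a}$ is the growth-rate constant in \textbf{(P2)}; the resulting estimate
\begin{equation*}
\|\Phi_{\lambda_0}(t,s)\|\le K_0\exp\bigl((\tilde{a}-\lambda_0)(t-s)+\bar\varepsilon s\bigr),\qquad t\ge s\ge 0,
\end{equation*}
is already a nonuniform exponential dichotomy with projector $I$. One would then propagate this full-rank projector from $\lambda_0$ down to the prescribed $\lambda\in(b,\lambda_0]$ by a roughness/persistence argument on the connected component $(b_m,+\infty)\subset\rho(A)$, which is precisely the technical heart of the matter and the source of the only obstacle I anticipate.
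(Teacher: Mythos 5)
Your primary argument is correct as far as it goes: $\lambda\in\rho(A)$ gives a nonuniform exponential dichotomy for the shifted system, and Remark~\ref{propiedadesproyector}(c) pins down the projector rank ($n$ for $\lambda>b\geq b_m$, $0$ for $\lambda<a\leq a_1$), hence the projector itself. But this route simply defers the content of the proposition to the remark, and the paper's proof is precisely the ``self-contained replacement'' you sketch in your last paragraph without carrying out. Concretely, the paper renames the \textbf{(P2)} growth rate to $L$, picks $h=\max\{L+1+\bar\varepsilon,\,\lambda+1+\bar\varepsilon\}$ so that $\alpha:=h-L>\bar\varepsilon$, reads directly from \textbf{(P2)} that $\Phi_h(t,s)=\Phi(t,s)\exp(-h(t-s))$ satisfies $\|\Phi_h(t,s)\|\leq K_0\exp(-\alpha(t-s)+\bar\varepsilon|s|)$ for $t\geq s$, so $\dot{x}=(A(t)-hI)x$ has a nonuniform exponential dichotomy with $P\equiv I$, and then, since $[\lambda,h]\subset\rho(A)$ lies in the single unbounded resolvent component $(b_m,+\infty)$, transfers the full-rank projector back to $\lambda$ via the constancy of rank from Remark~\ref{propiedadesproyector}. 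Both proofs therefore rest on the same two ingredients (a trivial-projector dichotomy far to the right, plus rank constancy on connected resolvent components); the difference is that you quote the conclusion (part (c)) outright, whereas the paper constructs the reference point $h$ explicitly and only invokes the constancy. If your goal is a proof that does not presuppose part (c), you must actually execute the construction you outline at the end, with a concrete $\lambda_0$ determined by the \textbf{(P2)} constants, as the paper does.
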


\begin{proof}
    By condition \textbf{(P2)} we have that the evolution operator satisfies
    $$\left \| \Phi (t,s) \right \| \leq K_0\exp(L\left | t-s \right | + \bar{\varepsilon}\left | s \right |), \quad t,\ s \in\R_0^{+}.$$

     Since this is true, we consider $\lambda>b$. Let  $h=\max\left\lbrace L+1+\bar{\varepsilon},\lambda+1+\bar{\varepsilon} \right \rbrace $ and $$\Phi_h(t,s)=\Phi(t,s)\exp(-h(t-s)),$$
    then we obtain the following estimation
    $$\left \| \Phi_h(t,s) \right\|=\left \| \Phi(t,s) \right\|\exp(-h(t-s))\leq K_0 \exp(L(t-s)+\bar{\varepsilon}|s|-h(t-s)), \quad (t\geq s).$$

 Assuming this, now we define $\alpha=h-L>\bar{\varepsilon}$ and the previous equation becomes
    $$\left \| \Phi_h(t,s) \right\|\leq K_0 \exp(-\alpha(t-s)+\bar{\varepsilon}|s|) \quad (t\geq s),$$
    which implies that the system $$\dot{x}=(A(t)-hI)x$$ has a nonuniform exponential dichotomy with projector $P(t)=I$ and $[\lambda,h] \subset \rho(A).$

    By using Remark \ref{propiedadesproyector} the system $$\dot{x}=(A(t)-\lambda I)x$$ also has a nonuniform exponential dichotomy with projector $P(t)=I$.

    For $\lambda < a$ the proof is similar considering $h=\min\left\lbrace -(L+1+\bar{\varepsilon}),(\lambda-1-\bar{\varepsilon} \right )\rbrace.$
\end{proof}

The following result has been proved by X. Zhang  and J. Chu \textit{et al.}  by using the condition \textbf{(P2)}.

\begin{proposition}\textnormal{(\cite{Chu,Zhang})}
If the system (\ref{lin}) satisfies \textnormal{\textbf{(P1)--(P2)}} then its spectrum is as in (\ref{espectronouniforme}) and there exist m+2 matrix functions $B_i:\R\rightarrow M_{n_i}(\R)$ such that 
\begin{equation}
\label{ultimalabel}
\left \| B_i(t) \right \| \leq \mathcal{M}_i \exp(\mu_i t) \,\, \textnormal{with} \, \, \mu_i, \mathcal{M}_i > 0
\end{equation}
where $\Sigma(B_i)=[a_i,b_i]$ with $i \in \lbrace 1, \dots , m \rbrace$ and $\Sigma(B_0)=\Sigma(B_{m+1})=\emptyset$  such that (\ref{lin}) is nonuniformly kinematically similar to
\begin{equation}
\label{similaridadcinematicanouniforme}
\dot{y}=Diag(B_0(t), B_1(t),\dots, B_m(t), B_{m+1}(t))y.
\end{equation}
\end{proposition}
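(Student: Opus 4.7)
The plan is to decompose $\R^n$ into $\Phi$-invariant spectral subspaces indexed by the gaps of $\Sigma(A)$ and to build a Lyapunov transformation from an adapted basis. By the previous proposition, $\Sigma(A)=\bigcup_{i=1}^{m}[a_i,b_i]$. Pick real numbers $\lambda_0<a_1$, $\lambda_i\in(b_i,a_{i+1})$ for $i=1,\dots,m-1$, and $\lambda_m>b_m$, so that each shifted system $\dot x=(A(t)-\lambda_i I)x$ admits a nonuniform exponential dichotomy on $\R_0^{+}$ with invariant projector $P_{\lambda_i}(t)$. By Remark \ref{propiedadesproyector} the ranks form a strictly increasing sequence $0=\mathrm{Rank}(P_{\lambda_0})<\mathrm{Rank}(P_{\lambda_1})<\cdots<\mathrm{Rank}(P_{\lambda_m})=n$, and the images nest: $\mathrm{Im}\,P_{\lambda_{i-1}}(t)\subset\mathrm{Im}\,P_{\lambda_i}(t)$. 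Setting $n_i:=\mathrm{Rank}(P_{\lambda_i})-\mathrm{Rank}(P_{\lambda_{i-1}})$, the subspaces $E_i(t):=\mathrm{Im}(P_{\lambda_i}(t)-P_{\lambda_{i-1}}(t))$ are $n_i$-dimensional, $\Phi$-invariant, and satisfy $\R^n=E_1(t)\oplus\cdots\oplus E_m(t)$.

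Next, I would pick a basis of each $E_i(0)$ and transport it by the flow: the vectors $v^{(i)}_j(t)=\Phi(t,0)v^{(i)}_j(0)$ stay inside $E_i(t)$ by invariance. Arranging all these vectors as the columns of a matrix $L(t)$ produces a smooth invertible transformation, and the change of variables $y=L^{-1}(t)x$ transforms (\ref{lin}) into $\dot y=U(t)y$ with $U=\mathrm{diag}(B_1,\dots,B_m)$, where each $B_i$ generates the restriction of $\Phi$ to $E_i$. The trivial blocks $B_0$ and $B_{m+1}$ are then inserted with dimension $0$, so that the conclusion reads uniformly regardless of whether $\Sigma(A)$ reaches the edges of the bounded growth region provided by \textbf{(P2)}.

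To identify $\Sigma(B_i)=[a_i,b_i]$, I would combine Lemma \ref{lemaespectrodiagonal}, which gives $\Sigma(U)=\bigcup_i\Sigma(B_i)$, with the $\cong$-invariance of the spectrum established in the previous lemmas, which gives $\Sigma(U)=\Sigma(A)$; the inclusion $\Sigma(B_i)\subseteq[a_i,b_i]$ follows by restricting the dichotomies at $\lambda_{i-1}$ and $\lambda_i$ to $E_i$, which supply dichotomies for $\dot y_i=(B_i(t)-\mu I)y_i$ whenever $\mu\notin[a_i,b_i]$. The growth bound (\ref{ultimalabel}) then drops out of \textbf{(P1)} applied to $B_i(t)=L^{-1}(t)A(t)L(t)-L^{-1}(t)\dot L(t)$ together with the norm estimates on $L$ and $L^{-1}$.

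The delicate point is precisely constructing $L(t)$ with $\|L(t)\|,\|L^{-1}(t)\|\leq M_{\upsilon}e^{\upsilon t}$: the columns $v^{(i)}_j(t)$ inherit the nonuniform growth $\bar\varepsilon s$ from \textbf{(P2)}, and each projector $P_{\lambda_i}(t)$ itself carries the $\varepsilon|s|$ nonuniform error from its dichotomy, so a naive choice of basis would produce a transformation whose two-sided bound is not of the required nonuniform form. The standard fix is an adapted Gram--Schmidt orthonormalisation performed within each $E_i(t)$: it absorbs the growth into a positive-definite factor whose norm and inverse can be controlled in terms of the dichotomy constants, and the resulting $\upsilon$ is simply the maximum of the $m+1$ nonuniform exponents. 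This algorithmic but lengthy bookkeeping is exactly what is carried out in \cite{Chu,Zhang}, and is the only real obstacle in the argument.
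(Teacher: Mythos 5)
The paper does not prove this proposition; it is simply quoted from \cite{Chu,Zhang}, so there is no in-paper argument to compare against. Your outline is the standard spectral-decomposition route used in those references (and in Siegmund's uniform version), and the skeleton — resolvent points $\lambda_0<\lambda_1<\dots<\lambda_m$, nested projectors with strictly increasing ranks, spectral subspaces $E_i(t)=\mathrm{Im}(P_{\lambda_i}(t)-P_{\lambda_{i-1}}(t))$, trivial extreme blocks, and identification of $\Sigma(B_i)$ via the block-diagonal spectrum lemma plus $\cong$-invariance — is the right one.

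There is, however, a genuine gap in the middle of your argument. You set $L(t)$ to be the matrix whose columns are the flow-transported vectors $v^{(i)}_j(t)=\Phi(t,0)v^{(i)}_j(0)$, i.e.\ $L(t)=\Phi(t,0)L(0)$. Such an $L$ is a fundamental matrix of (\ref{lin}), so $\dot L=AL$ and the transformed coefficient $U=L^{-1}AL-L^{-1}\dot L$ vanishes identically. In particular $U=\mathrm{diag}(B_1,\dots,B_m)$ with all $B_i\equiv 0$, which is useless, and there is no contradiction with $\Sigma(A)\neq\{0\}$ precisely because this $L$ grows/decays exponentially and is not a Lyapunov transformation, so kinematical similarity and spectrum preservation do not apply to it. The Gram--Schmidt (equivalently, a QR or a direct normalization via the spectral projectors) is therefore not, as your last paragraph suggests, only a bookkeeping device to tame the norm bounds $\|L\|,\|L^{-1}\|\le M_\upsilon e^{\upsilon t}$; it is the step that replaces the fundamental matrix by a bona fide Lyapunov transformation and produces the nontrivial block-diagonal $U(t)$ whose blocks you then analyse. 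You should present the normalized $L(t)$ as the primary object, derive $U=\dot R\,R^{-1}$ (or the analogous expression) for the normalizing factor $R$, verify block-diagonality from the $\Phi$-invariance of the $E_i(t)$, and only then address the two-sided nonuniform growth estimate. With that correction, the rest of your outline — the nestedness of the images, the strict rank jumps from Remark \ref{propiedadesproyector}, the zero-dimensional $B_0,B_{m+1}$, the inclusion $\Sigma(B_i)\subseteq[a_i,b_i]$ from restricted dichotomies, and the reverse inclusion from Lemma \ref{lemaespectrodiagonal} together with $\Sigma(U)=\Sigma(A)$ — goes through.
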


\begin{remark}
In our case the blocks $B_0(t)$ and $B_{m+1}(t)$ are omitted due to their dimensions $N_0$ and $N_{m+1}$ respectively are 0
\textnormal{(for more details, see \cite[Theorem 3.2]{S2002}).}

\end{remark}

In \cite{Potzsche} it is introduced the concept of \textit{diagonal significance} which is fundamental for obtain the almost reducibility in the the case of exponential dichotomy \cite[Proposition 4]{CR2017} in a discrete context.

We point out that in \cite{Palmer2015} the concept of diagonal significance is studied in the continuous framework. In our case this condition it is not necessary. However, in the case of nonuniform exponential dichotomy the condition of diagonal significance is still open.

The following result is inspired by F. Lin \cite{Lin}, which mention a property about the spectrum of the upper triangular systems.

\begin{proposition}
Let $C(t)$ be an upper triangular $n\times n$-matrix function such that  $\Sigma(C)=[a,b]$, then
$$\bigcup_{i=1}^{n}\Sigma(c_{ii})\subset\Sigma(C),$$
where $c_{ii}(t)$ are the diagonal coefficients of $C(t).$
\end{proposition}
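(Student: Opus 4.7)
The plan is to prove the inclusion by contraposition: it suffices to show that every $\lambda\in\rho(C)$ also belongs to $\rho(c_{ii})$ for each index $i$. The single-interval hypothesis $\Sigma(C)=[a,b]$ is crucial, for it means $\rho(C)=(-\infty,a)\cup(b,+\infty)$ and there are no interior spectral gaps to handle; moreover, by the preceding proposition, on both unbounded components the shifted system $\dot x=(C(t)-\lambda I)x$ admits a nonuniform exponential dichotomy with a \emph{trivial} projector, namely $P(t)\equiv I$ for $\lambda>b$ and $P(t)\equiv 0$ for $\lambda<a$. In either case the dichotomy estimate collapses to a direct bound on the full evolution operator: for $\lambda>b$ there exist $K\ge 1$, $\alpha>0$, $\varepsilon\ge 0$ with
\[
\|\Phi_{C-\lambda I}(t,s)\|\le K\exp(-\alpha(t-s)+\varepsilon s),\qquad t\ge s,
\]
and the analogous bound on $t\le s$ with $+\alpha(t-s)$ in the exponent when $\lambda<a$.

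The next step is to transfer this matrix estimate to the scalar diagonal dynamics by exploiting the upper triangularity of $C(t)$. Since the lower triangular part of $C$ vanishes, an induction on the dimension (or a direct entrywise integration of $\dot\Phi=C\Phi$) shows that $\Phi_C(t,s)$ is itself upper triangular and that the diagonal ODEs decouple from the off-diagonal ones, yielding
\[
\bigl(\Phi_{C-\lambda I}(t,s)\bigr)_{ii}=\exp\!\Big(\int_s^t (c_{ii}(u)-\lambda)\,du\Big)=:\phi_{ii,\lambda}(t,s),
\]
which is exactly the evolution operator of the scalar equation $\dot y=(c_{ii}(t)-\lambda)y$. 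Combined with the elementary bound $|\phi_{ii,\lambda}(t,s)|\le\|\Phi_{C-\lambda I}(t,s)\|$ on a diagonal entry, the estimate from the previous step becomes $|\phi_{ii,\lambda}(t,s)|\le K\exp(-\alpha(t-s)+\varepsilon s)$ for $t\ge s$ when $\lambda>b$, and its mirror image when $\lambda<a$.

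The conclusion is then immediate: the scalar equation $\dot y=(c_{ii}(t)-\lambda)y$ has a nonuniform exponential dichotomy with scalar projector $1$ or $0$ respectively, so $\lambda\in\rho(c_{ii})$ for every $i$, and taking the union over $i$ gives $\bigcup_{i=1}^n\Sigma(c_{ii})\subset\Sigma(C)$. I do not foresee a serious obstacle: the single-interval hypothesis is precisely what trivialises the invariant projector, and the only item requiring careful verification is that $\Phi_C$ inherits the upper triangular form of $C$ with the claimed diagonal entries, which is a standard consequence of the triangular shape of the matrix ODE $\dot\Phi=C\Phi$.
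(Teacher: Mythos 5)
Your proof is correct and follows essentially the same route as the paper: invoke Proposition 3 to obtain a nonuniform exponential dichotomy with trivial projector ($I$ for $\lambda>b$, $0$ for $\lambda<a$), then bound the scalar evolution operator $\exp\bigl(\int_s^t (c_{ii}-\lambda)\,du\bigr)$ by $\|\Phi_{\lambda}(t,s)\|$ to conclude each scalar equation inherits a dichotomy with the corresponding trivial scalar projector. The only difference is cosmetic: you make explicit the (standard) observation that $\Phi_C$ is upper triangular with those diagonal entries, whereas the paper writes the entrywise bound directly without comment.
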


\begin{proof}
We will prove that $\bigcup_{i=1}^{n}\Sigma(c_{ii}) \subset \Sigma(C)$. Let $\lambda \notin \Sigma(C)=[a,b]$ such that $\lambda>b$. By Proposition 3, we have that the upper triangular system
\begin{equation}
\label{triangularperturbado}
\dot{x}=(C(t)-\lambda I)x,
\end{equation}
has a nonuniform exponential dichotomy with projector $P(t)=I$. That is, the evolution operator of (\ref{triangularperturbado}), namely $\Phi_{\lambda}(t,s)$, satisfies
$$\left \| \Phi_{\lambda}(t,s) \right\|\leq K_{\lambda} \exp(-\alpha_{\lambda}(t-s)+\varepsilon_{\lambda}|s|) \quad (t\geq s).$$

Now for each $i \in \lbrace 1, \dots , n \rbrace$, we have the following estimate
$$\exp\left( \int_{s}^{t} (c_{ii}(r)-\lambda) dr \right) \leq \left \| \Phi_{\lambda}(t,s) \right\| \leq K_{\lambda} \exp(-\alpha_{\lambda}(t-s)+\varepsilon_{\lambda}|s|)\quad (t\geq s),$$
and we conclude that the diagonal systems $$\dot{x_i}=(c_{ii}(t)-\lambda)x_i$$ have a nonuniform exponential dichotomy with projector $P(t)=1$ (scalar systems), which implies that $\lambda \notin \bigcup_{i=1}^{n}\Sigma(c_{ii})$.
\\

The case $\lambda<a$ can be proved analogously, thus $\bigcup_{i=1}^{n}\Sigma(c_{ii}) \subset \Sigma(C)$.
\end{proof}

\section{Proof of main results.}
\subsection{Proof of Theorem 1.} The proof will be made in several steps, whose ideas are inspired by the work developed by F. Lin in \cite{Lin}:

\textit{Step 1):} \textit{The system (\ref{lin}) is nonuniform kinematically similar to an upper triangular system:} By Proposition 1, there exists a positive integer $m\leq n$ such that:
$$\Sigma(A)=\bigcup_{i=1}^{m}[ a_i,b_i ],\quad \text{with $-\infty<a_1\leq b_1<\ldots<a_m\leq b_m <+\infty$.}$$
The Proposition 4 says that the systems (\ref{lin}) and (\ref{similaridadcinematicanouniforme}) are nonuniformly kinematically similar, where $B_i(t)$ are matrix function of order $n_i\times n_i$ satisfying (\ref{ultimalabel}) and
\newline $\Sigma(B_i)=[a_i,b_i]$ with $i \in \lbrace 1, \dots , m \rbrace$. Now, by using the method of QR factorization, we know that, for each $i \in \lbrace 1, \dots , m \rbrace$, the systems
\begin{equation}
\label{bloque_i}
\dot{x}_i=B_i(t)x_i
\end{equation}
are kinematically similar (see Definition 1 with $\upsilon=0$) to
\begin{equation}
\label{bloquetriangular_i}
\dot{y}_i=D_i(t)y_i,
\end{equation}
 where $D_i(t)$ is a  upper triangular $n_i\times n_i$-matrix function such that 
 $$\left \| D_i(t) \right \| \leq \mathcal{N}_i \exp( \varsigma_i t) \, \, \textnormal{and} \, \, \Sigma(D_i)=[a_i,b_i]$$

\vspace{0.3 cm}

\textit{Step 2):}  \textit{Nonuniform exponential dichotomy of scalar differential equation:} From now on, the diagonal terms of the upper triangular matrix $D_i$ described in (\ref{bloquetriangular_i}) will be denoted by $\left \{ d_{rr}^{(i)}\right \}_{r=1}^{n_i}$, where $i$ is a fixed element of $\left \{1,\dots,m\right \}$. Now, by Proposition 5, we have $$\bigcup_{r=1}^{n_i}\Sigma(d_{rr}^{(i)}) \subset \Sigma(D_i).$$

By Proposition 3, for any $\delta>0$ there exists $M_{\delta}=\frac{\delta}{m}>0$ such that the scalar differential equation

\begin{equation}
\label{escalarinestable}
\dot{x}=\left [ d_{rr}^{(i)}(t)-(a_i-M_{\delta}) \right ]x
\end{equation}
has a nonuniform exponential dichotomy on $\R_0^{+}$ with projector $P(t)=0$ and \begin{equation}
\label{escalarestable}
\dot{x}=\left [ d_{rr}^{(i)}(t)-(b_i+M_{\delta}) \right ]x
\end{equation}
has a nonuniform exponential dichotomy on $\R_0^{+}$ with projector $P(t)=1$. In consequence, there exist $\beta\geq 1$, $\alpha>0$, $\varepsilon\geq0$ and in this case we need the condition $\alpha>\varepsilon$,  such that 

\begin{equation}
\label{sistemasescalares}
\left\{
\begin{array}{rclr}
|\exp(\Phi(t,s))|&\leq& \beta  \exp(\alpha(t-s)+\varepsilon s) \quad &(t\leq s),\\
|\exp(\Psi(t,s))|&\leq& \beta  \exp(-\alpha(t-s)+\varepsilon s) \quad &(t\geq s),
\end{array}
\right.
\end{equation}

where
$$\exp(\Phi(t,s))=\exp\left(\int_{s}^{t}(d_{rr}^{(i)}(\tau)-(a_i-M_{\delta}))d\tau \right ),$$
and
$$\exp(\Psi(t,s))=\exp\left (\int_{s}^{t}(d_{rr}^{(i)}(\tau)-(b_i+M_{\delta}))d\tau \right ),$$
are the evolution operators of (\ref{escalarinestable}) and (\ref{escalarestable}) respectively.

\vspace{0.3 cm}

\textit{Step 3):} \textit{Upper and lower bounds for (\ref{sistemasescalares}):} For any fixed $i \in\left \{1,\dots,m\right \}$, there exist two functions $c_r^{(i)}$ and $\lambda_r^{(i)}$ such that
\begin{equation}
\label{funcionesatrozos}
a_i\leq c_r^{(i)}(t)\leq b_i  \quad\text{and}\quad |\lambda_r^{(i)}(t)|\leq M_{\delta} \quad \text{for any $t\in\R_0^{+}$}
\end{equation}
and there exist $\bar{\Delta}, \upsilon \geq0$ verifying
\begin{equation}
\label{estimacionintegral}
\left |\int_{0}^{t}[d_{rr}^{(i)}(\tau)-(c_r^{(i)}(\tau)+\lambda_r^{(i)}(\tau))]d\tau \right |\leq \bar{\Delta}+\upsilon t, \quad \text{if $t\geq0$}
\end{equation}
for any $r\in \left \{1,\dots,n_i\right \}$.

We will construct a strictly increasing and unbounded sequence of real numbers $\left \{ T_l^{(i)}\right \}_{l=0}^{+\infty}$ satisfying $T_0^{(i)}=0$ such that the function $c_r^{(i)}, \lambda_r^{(i)}:\R_0^{+}\rightarrow \R$ defined by:

$$c_r^{(i)}(t)=\left\{
\begin{array}{crl}
a_i &\quad\text{if} &\quad t \in [T_q^{(i)},T_{q+1}^{(i)}) \quad (q=0,2,4,\dots)\\
b_i &\quad\text{if}&\quad t \in [T_{q+1}^{(i)},T_{q+2}^{(i)})
\end{array}
\right.$$
and
$$\lambda_r^{(i)}(t)=\left\{
\begin{array}{crl}
-M_{\delta} &\quad\text{if}&\quad t \in [T_q^{(i)},T_{q+1}^{(i)}) \quad (q=0,2,4,\dots)\\
M_{\delta}&\quad\text{if}&\quad t \in [T_{q+1}^{(i)},T_{q+2}^{(i)})
\end{array}
\right.$$
satisfy properties (\ref{funcionesatrozos}) and (\ref{estimacionintegral}) on $\R_0^{+}$.

It is straightforward to see that (\ref{funcionesatrozos}) is always satisfied. In order to verify (\ref{estimacionintegral}), we interchange $t$ by $s$ in the first inequality of (\ref{sistemasescalares}), then we have:

\begin{equation}
\label{estimaciones}
\left \{
\begin{array}{rclr}
\Phi(t,s) &\geq& \alpha(t-s)-\varepsilon t -\ln(\beta) \quad& (t\geq s),\\
\Psi(t,s) &\leq& -\alpha(t-s)+\varepsilon s+\ln(\beta) \quad& (t\geq s).
\end{array}
\right.
\end{equation}

By using induction, we will verify that there exists a sequence $\left \{ T_l^{(i)}\right \}_{l=0}^{+\infty}$ satisfying (\ref{estimacionintegral}).
In order to start, by the Proposition 2 combined with the fact that
$$\exp \left (\int_{s}^{t}d_{rr}^{(i)}(\tau)d\tau\right)\leq\left \| \Phi_{D_i}(t,s) \right \|,$$
where $\Phi_{D_i}(t,s)$ is the evolution operator of the system (\ref{bloquetriangular_i}),
then there exist cons\-tants $\bar{a}\geq0$, $\bar{\varepsilon}\geq0$ and $K\geq1$ such that the following inequality is satisfies
\begin{equation}
\label{porarriba}
\int_{s}^{t}d_{rr}^{(i)}(\tau)d\tau\leq \bar{a}|t-s|+\bar{\varepsilon}s+\ln(K).
\end{equation}

Then, using the equation (\ref{porarriba}) we have
\begin{equation}
\label{porarribaperturbado}
\Phi(t,s)\leq \bar{a}(t-s)+\bar{\varepsilon}s+\ln(K)+|a_i|(t-s)+M_{\delta}(t-s).
\end{equation}

On the other hand, by (\ref{estimaciones}) we obtain
\begin{equation}
\int_{s}^{t}d_{rr}^{(i)}(\tau)d\tau\geq\alpha(t-s)-\varepsilon t+a_i(t-s)-M_{\delta}(t-s)-\ln(\beta),
\end{equation}
and using the last expression we deduce
\begin{equation}
\label{porabajoperturbado}
\Psi(t,s)\geq (-\alpha-(b_i-a_i)-2M_{\delta})(t-s)-\varepsilon t-\ln(\beta).
\end{equation}

By the equations (\ref{estimaciones}), (\ref{porarribaperturbado}) and (\ref{porabajoperturbado}) we have the following

\begin{equation}
\label{estimacionarriba}
\left \{
\begin{array}{rclr}
\Phi(t,s) &\geq& (\alpha-\varepsilon)(t-s)-\varepsilon s -\ln(\beta) \quad &(t\geq s),\\
\Phi(t,s) &\leq& (\bar{a}+|a_i|+M_{\delta}+\bar{\varepsilon})(t-s)+ \bar{\varepsilon}s+\ln(K) \quad &(t\geq s).
\end{array}
\right.
\end{equation}
and 

\begin{equation}
\label{estimacionabajo}
\left\{
\begin{array}{rclr}
\Psi(t,s) &\leq& -(\alpha-\varepsilon)(t-s)+\varepsilon s+\ln(\beta) \quad & (t\geq s),\\
\Psi(t,s) &\geq& (-\varepsilon-(b_i-a_i)-2M_{\delta})(t-s) - \varepsilon s-\ln(\beta) \quad& (t\geq s).
\end{array}
\right.
\end{equation}

Now we will introduce constants and conditions  that allow us to obtain  the desired result (this conditions are inherent in the nonuniform framework).

Let $N, \xi, p, \bar{\xi}, \bar{p}\in \R$ constants that satisfy:
\begin{itemize}
\item[\textbf{(C1)}]$0<N<\min\left \{\alpha-\varepsilon, \bar{a}+|a_i|+M_{\delta}+\bar{\varepsilon}\right \}$.\\
\item [\textbf{(C2)}]$\max\left \{\ln(K), \ln(\beta)\right \}< p=-\bar{p}$.\\
\item [\textbf{(C3)}]$0\leq\max\left \{\bar{\varepsilon}, \varepsilon\right \}\leq-\bar{\xi}\leq\xi$.
\end{itemize}

If $s=0$ in the first inequality of (\ref{estimacionarriba}) we obtain
$$\Phi(t,0) \geq (\alpha-\varepsilon)t-\ln(\beta), \quad t\geq0,$$
which implies that $\Phi(t,0)$ is unbounded in $\R_0^{+}$. In consequence, given $N, \xi, p \in\R$, there exists $T_1^{(i)}>0$ such that
\begin{equation}
\label{t1}
\left \{
\begin{array}{rclr}
\Phi(T_1^{(i)},0)&=&N(T_{1}^{(i)}-0)+\xi0+p,\\
\Phi(t,0)&<&N(t-0)+\xi0+p \quad &(0\leq t<T_{1}^{(i)}).
\end{array}
\right.
\end{equation}

Now we consider the value $\bar{\xi}T_{1}^{(i)}+\bar{p}$ and
$$T_{2}^{(i)}=\min\left \{\omega\in\R_0^{+}:\Psi(\omega,T_1^{(i)})=-N(T_{1}^{(i)}-0)-\xi0-p \right \},$$
with $T_2^{(i)}>T_1^{(i)}$. Consequently we will calculate the slope of the straight line that joins the points $\bar{\xi}T_{r_1}^{(i)}+\bar{p}$ and $-N(T_{r_1}^{(i)}-0)-\xi0-p$, which we will denote by $\bar{N}.$ Moreover,  $\bar{N}$ satisfies the following technical condition:
\medskip
\begin{itemize}
\item [\textbf{(C4)}]$\max\left \{-(\alpha-\varepsilon),-(\varepsilon+(b_i-a_i)+2M_{\delta})\right \}<\bar{N}$.
\end{itemize}
\medskip

Therefore we have that the calculation of slope of the line $\bar{N}$ is as follows
\begin{equation*}
\begin{split}
\bar{N}&=\frac{(-N(T_{1}^{(i)}-0)-\xi0-p)-(\bar{\xi}T_{1}^{(i)}+\bar{p})}{T_{2}^{(i)}-T_{1}^{(i)}},\\
\bar{N}&=\frac{-N(T_{1}^{(i)}-0)-\xi0)-\bar{\xi}T_{1}^{(i)}}{T_{2}^{(i)}-T_{1}^{(i)}}.
\end{split}
\end{equation*}

Due to the conditions \textbf{(C1)} and \textbf{(C3)}, we have that $\bar{N}\leq0$. In this way, we consider the straight line $\bar{N}(t-T_{1}^{(i)})+\bar{\xi}T_{1}+\bar{p}$.

 Based on the above and the equation (\ref{estimacionabajo}), if $s=T_1^{(i)}$, then there exists \newline $T_2^{(i)}>T_1^{(i)}$ such that
 \begin{equation}
\label{t2}
\left\{
\begin{array}{rclr}
\Psi(T_2^{(i)},T_1^{(i)})&=&\bar{N}(T_{2}^{(i)}-T_{1}^{(i)})+\bar{\xi}T_{1}+\bar{p},\\\\
\Psi(t,T_1^{(i)})&>&\bar{N}(t-T_{1}^{(i)})+\bar{\xi}T_{1}^{(i)}+\bar{p} \quad & (T_{1}^{(i)}\leq t<T_{2}^{(i)}).
\end{array}
\right.
\end{equation}

By (\ref{estimacionarriba}) and (\ref{t1}) we obtain that for $t\in[0,T_{1}^{(i)})$
$$-\varepsilon t-\ln(\beta)-Nt-\xi t-p\leq\int_{0}^{t}(d_{rr}^{(i)}(\tau)-(c_r^{(i)}(\tau)+\lambda_r^{(i)}(\tau)))d\tau\leq-\bar{N}t-\bar{\xi}t-\bar{p}.$$

In fact, we have
$$\varepsilon t-\ln(\beta)-(-\bar{N}t-\bar{\xi}t-\bar{p})\leq-\varepsilon t-\ln(\beta),$$
also by the equation (\ref{estimacionarriba}) we have
$$-\varepsilon t-\ln(\beta)\leq\int_{0}^{t}(d_{rr}^{(i)}(\tau)-(a_i-M_{\delta}))d\tau,$$ then by the equation (\ref{t1}) we can see that $$\int_{0}^{t}(d_{rr}^{(i)}(\tau)-(a_i-M_{\delta}))d\tau<N(t-0)+\xi0+p$$
and finally,
$$N(t-0)+\xi0+p\leq Nt+\xi t+p.$$

On the other hand, from the equations (\ref{estimacionabajo}) and (\ref{t2}), we have for $t\in[T_{1}^{(i)},T_{2}^{(i)})$

$$\varepsilon t+\ln(\beta)+Nt+\xi t+p\geq\int_{T_{1}^{(i)}}^{t}(d_{rr}^{(i)}(\tau)-(c_r^{(i)}(\tau)+\lambda_r^{(i)}(\tau)))d\tau\geq-(-\bar{N}t-\bar{\xi}t-\bar{p}).$$

Similarly to the previous estimations, we have that $$\varepsilon t+\ln(\beta)+Nt+\xi t+p\geq\varepsilon t+\ln(\beta),$$
by the equation (\ref{estimacionabajo}) we have
$$\varepsilon t+\ln(\beta)\geq\int_{T_{1}^{(i)}}^{t}(d_{rr}^{(i)}(\tau)-(b_i+M_{\delta}))d\tau,$$
and then by the equation (\ref{t2}) $$\int_{T_{1}^{(i)}}^{t}(d_{rr}^{(i)}(\tau)-(b_i+M_{\delta}))d\tau>\bar{N}(t-T_{1}^{(i)})+\bar{\xi}T_{1}^{(i)}+\bar{p}\geq-(-\bar{N}t-\bar{\xi}t-\bar{p}).$$

Thus for $t\in[0,T_{2}^{(i)})$
$$\left |\int_{0}^{t}(d_{rr}^{(i)}(\tau)-(c_r^{(i)}(\tau)+\lambda_r^{(i)}(\tau)))d\tau \right |\leq2\varepsilon t+2\ln(\beta)+2\max\left \{Nt+\xi t+p,-\bar{N}t-\bar{\xi}t-\bar{p} \right \}.$$

As inductive hypothesis, we will assume that there exists $2m+1$ numbers $$0=T_0^{(i)}<T_{1}^{(i)}<T_{2}^{(i)}< \dots<T_{{2m-1}}^{(i)}< T_{{2m}}^{(i)}$$
such that (\ref{funcionesatrozos}) is satisfies and for $t\in[0,T_{{2m}}^{(i)})$

$$\left |\int_{0}^{t}(d_{rr}^{(i)}(\tau)-(c_r^{(i)}(\tau)+\lambda_r^{(i)}(\tau)))d\tau \right |\leq2\varepsilon t+2\ln(\beta)+2\max\left \{Nt+\xi t+p,-\bar{N}t-\bar{\xi}t-\bar{p} \right \}.$$

By using the first inequality of (\ref{estimacionarriba}) and considering $s=T_{2m}^{(i)}$, we have that
$$\Phi(t,T_{2m}^{(i)}) \geq (\alpha-\varepsilon)(t-T_{2m}^{(i)})-\varepsilon T_{2m}^{(i)} -\ln(\beta)$$
is unbounded for any $t>T_{2m}^{(i)}$. Then, there exists $T_{{2m+1}}^{(i)}>T_{{2m}}^{(i)}$ such that

\begin{equation}
\label{t2m+1}
\left\{
\begin{array}{rcl}
\Phi(T_{2m+1}^{(i)},T_{2m}^{(i)})&=&N(T_{{2m+1}}^{(i)}-T_{{2m}}^{(i)})+\xi T_{{2m}}^{(i)}+p,\\\\
\Phi(t,T_{2m}^{(i)})&<&N(t-T_{{2m}}^{(i)})+\xi T_{{2m}}^{(i)}+p \quad (T_{{2m}}^{(i)}\leq t<T_{{2m+1}}^{(i)}).
\end{array}
\right.
\end{equation}

Now we consider the value $\bar{\xi}T_{{2m+1}}^{(i)}+\bar{p}$ and
$$T_{{2m+2}}^{(i)}=\min\left \{\omega\in\R_0^{+}:\Psi(\omega,T_{2m+1}^{(i)})=-N(T_{{2m+1}}^{(i)}-T_{{2m}}^{(i)})-\xi T_{{2m}}^{(i)}-p \right \},$$
with $T_{2m+2}^{(i)}>T_{2m+1}^{(i)}$. As before, let $\bar{N}$ be the slope of the straight line joining the points 
$$\bar{\xi}T_{{2m+1}}^{(i)}+\bar{p} \quad \textnormal{ and} \quad -N(T_{{2m+1}}^{(i)}-T_{{2m}}^{(i)})-\xi T_{{2m+1}}^{(i)}-p.$$ By the conditions \textbf{(C1)}, \textbf{(C2)} and \textbf{(C3)} we have $\bar{N}\leq0$. In this way, we consider the straight
line
$$\bar{N}(t-T_{{2m+1}}^{(i)})+\bar{\xi}T_{{2m+1}}^{(i)}+\bar{p}.$$

Combining the above straight line together with the equation (\ref{estimacionabajo}), we can see that if $s=T_{2m+1}^{(i)}$, then there exists $T_{2m+2}^{(i)}>T_{2m+1}^{(i)}$ such that
 \begin{equation}
\label{t2m+2}
\left\{
\begin{array}{rcl}
\Psi(T_{2m+2}^{(i)},T_{2m+1}^{(i)})&=&\bar{N}(T_{2m+2}^{(i)}-T_{2m+1}^{(i)})+\bar{\xi}T_{2m+1}+\bar{p},\\\\
\Psi(t,T_{2m+1}^{(i)})&>&\bar{N}(t-T_{2m+1}^{(i)})+\bar{\xi}T_{2m+1}^{(i)}+\bar{p},
\end{array}
\right.
\end{equation}
for $T_{2m+1}^{(i)}\leq t<T_{2m+2}^{(i)}.$

Now we will prove that for $t\in[0,T_{{2m+2}}^{(i)})$ we obtain
$$\left |\int_{0}^{t}(d_{rr}^{(i)}(\tau)-(c_r^{(i)}(\tau)+\lambda_r^{(i)}(\tau)))d\tau \right |\leq2\varepsilon t+2\ln(\beta)+2\max\left \{Nt+\xi t+p,-\bar{N}t-\bar{\xi}t-\bar{p} \right \}.$$

By inductive hypothesis, we have proved the case in which $t\in[0,T_{{2m}}^{(i)})$. If $t\in[T_{{2m}}^{(i)},T_{{2m+1}}^{(i)})$ we have
\begin{equation*}
\begin{split}
\left |\int_{0}^{t}(d_{rr}^{(i)}(\tau)-(c_r^{(i)}(\tau)+\lambda_r^{(i)}(\tau)))d\tau \right |
&=\left          |\int_{T_{{2m}}^{(i)}}^{t}(d_{         rr}^{(i)}(\tau)-(c_r^{(i)}(\tau)            +\lambda_r^{(i)}(\tau)))d\tau               \right |\\
&=\left                              |\int_{T_{{2m}}^{(i)}}^{t}(d_{    rr}^{(i)}(\tau)-(a_i-M_{\delta}))d\tau \right |.
\end{split}
\end{equation*}

By the equations (\ref{estimacionarriba}) and (\ref{t2m+1}),  as before we obtain that for $t\in[T_{{2m}}^{(i)},T_{{2m+1}}^{(i)})$
$$-\varepsilon t-\ln(\beta)-Nt-\xi t-p\leq\int_{T_{r_{2m}}^{(i)}}^{t}(d_{rr}^{(i)}(\tau)-(a_i-M_{\delta}))d\tau\leq-\bar{N}t-\bar{\xi}t-\bar{p}.$$

In the case $t\in[T_{{2m+1}}^{(i)},T_{{2m+2}}^{(i)})$, we have

$$
\begin{array}{rcl}
\left |\int_{0}^{t}(d_{rr}^{(i)}(\tau)-(c_r^{(i)}(\tau)+\lambda_r^{(i)}(\tau)))d\tau \right |& = &
\left |N(T_{{2m+1}}^{(i)}-T_{{2m}}^{(i)})+\xi T_{{2m}}^{(i)}+p \right.\\
&&+\left. \displaystyle\int_{T_{{2m+1}}^{(i)}}^{t}(d_{rr}^{(i)}(\tau)-(b_i+M_{\delta}))d\tau \right |.
\end{array}$$

Therefore by the equations (\ref{estimacionabajo}) and (\ref{t2m+2}), we have that for $t\in[T_{{2m+1}}^{(i)},T_{{2m+2}}^{(i)})$ it follows that

$$
\begin{array}{rcl}
\varepsilon t+\ln(\beta)+Nt+\xi t+p &\geq& N(T_{{2m+1}}^{(i)}-T_{{2m}}^{(i)})+\xi T_{{2m}}^{(i)}+p\\\\
&& +\displaystyle\int_{T_{{2m+1}}^{(i)}}^{t}(d_{rr}^{(i)}(\tau)-(b_i+M_{\delta}))d\tau\\\\
&& \geq-(-\bar{N}t-\bar{\xi}t-\bar{p}).
\end{array}
$$

In fact, (\ref{estimacionabajo}) and (\ref{t2m+2}) implies that

\begin{equation}
\label{tecnica}   \varepsilon t+\ln(\beta)\geq\int_{T_{{2m+1}}^{(i)}}^{t}(d_{rr}^{(i)}(\tau)-(b_i+M_{\delta}))d\tau\geq\bar{N}(t-T_{{2m+1}}^{(i)})+\bar{\xi} T_{{2m+1}}^{(i)}+\bar{p}.
\end{equation}

Now considering the two previous inequalities separately in (\ref{tecnica}), we obtain
$$
\varepsilon t+\ln(\beta)+N(T_{{2m+1}}^{(i)}-T_{{2m}}^{(i)})+\xi T_{{2m}}^{(i)}+p\geq \int_{0}^{t}(d_{rr}^{(i)}(\tau)-(b_i+M_{\delta}))d\tau
$$
and

$$
    \int_{0}^{t}(d_{rr}^{(i)}(\tau)-(b_i+M_{\delta}))d\tau\geq\bar{N}(t-T_{{2m+1}}^{(i)})+\bar{\xi} T_{{2m+1}}^{(i)}+\bar{p}+N(T_{{2m+1}}^{(i)}-T_{{2m}}^{(i)})+\xi T_{{2m}}^{(i)}+p.
$$

Then by the above first inequality, we have
$$\varepsilon t+\ln(\beta)+Nt+\xi t+p\geq\varepsilon t+\ln(\beta)+N(T_{{2m+1}}^{(i)}-T_{{2m}}^{(i)})+\xi T_{{2m}}^{(i)}+p$$
and by above second inequality, we obtain $$\bar{N}(t-T_{{2m+1}}^{(i)})+\bar{\xi} T_{{2m+1}}^{(i)}+\bar{p}+N(T_{{2m+1}}^{(i)}-T_{{2m}}^{(i)})+\xi T_{{2m}}^{(i)}+p\geq-(-\bar{N}t-\bar{\xi}t-\bar{p}).$$

Therefore, for $t\in[0,T_{{2m+2}}^{(i)})$ it follows that
$$\left |\int_{0}^{t}(d_{rr}^{(i)}(\tau)-(c_r^{(i)}(\tau)+\lambda_r^{(i)}(\tau)))d\tau \right |\leq2\varepsilon t+2\ln(\beta)+2\max\left \{Nt+\xi t+p,-\bar{N}t-\bar{\xi}t-\bar{p} \right \}.$$

Finally, we will prove that $T_{m}^{(i)}\rightarrow +\infty$ as $m\rightarrow +\infty$. For that, first of all we have by the equations (\ref{estimacionarriba}) and (\ref{t2m+1}):
\begin{equation*}
\begin{split}
N(T_{{2m+1}}^{(i)}-T_{{2m}}^{(i)})+\xi T_{{2m}}^{(i)}+p &=\int_{T_{{2m}}^{(i)}}^{T_{{2m+1}}^{(i)}}(d_{rr}^{(i)}(\tau)-(a_i-M_{\delta}))d\tau\\
&\leq(\bar{a}+|a_i|+M_{\delta}+\bar{\varepsilon})(T_{{2m+1}}^{(i)}-T_{{2m}}^{(i)})+ \bar{\varepsilon}T_{{2m}}^{(i)}+\ln(K),
\end{split}
\end{equation*}
which implies
$$(\xi-\bar{\varepsilon})T_{{2m}}^{(i)}+p-\ln(K)\leq(\bar{a}+|a_i|+M_{\delta}+\bar{\varepsilon}-N)(T_{{2m+1}}^{(i)}-T_{{2m}}^{(i)}).$$

By the conditions \textbf{(C1)} and \textbf{(C2)}, we have the following inequality

$$0<\frac{p-\ln(K)}{\bar{a}+|a_i|+M_{\delta}+\bar{\varepsilon}-N}\leq T_{{2m+1}}^{(i)}-T_{{2m}}^{(i)}.$$

On the other hand, in view of the equations (\ref{estimacionabajo}) and (\ref{t2m+2}) we have

\begin{equation*}
\begin{array}{rcl}
\bar{N}(T_{{2m+2}}^{(i)}-T_{{2m+1}}^{(i)})+\bar{\xi} T_{{2m+1}}^{(i)}+\bar{p}&=& \displaystyle \int_{T_{{2m+1}}^{(i)}}^{T_{{2m+2}}^{(i)}}(d_{rr}^{(i)}(\tau)-(b_i+M_{\delta}))d\tau\\
&\geq&(-\varepsilon-(b_i-a_i)-2M_{\delta})(T_{{2m+2}}^{(i)}-T_{{2m+1}}^{(i)})\\\\
&&-\varepsilon T_{{2m+1}}^{(i)}-\ln(\beta),
\end{array}
\end{equation*}
which implies that
$$(\bar{\xi}+\varepsilon)T_{{2m+1}}^{(i)}+\bar{p}+\ln(\beta)\geq-(\varepsilon+(b_i-a_i)+2M_{\delta}+\bar{N})(T_{{2m+2}}^{(i)}-T_{{2m+1}}^{(i)}).$$

Similarly, the conditions \textbf{(C2)} and \textbf{(C4)} allow us to ensure that

$$0<\frac{-\bar{p}-\ln(\beta)}{\varepsilon+(b_i-a_i)+2M_{\delta}+\bar{N}}\leq T_{{2m+2}}^{(i)}-T_{{2m+1}}^{(i)}.$$

Thus the above allows us to obtain the existence of $c_r^{(i)}(t)$, $\lambda_r^{(i)}(t)$ defined on $\R_0^{+}$ verifying (\ref{funcionesatrozos}) and finally:
$$\left |\int_{0}^{t}(d_{rr}^{(i)}(\tau)-(c_r^{(i)}(\tau)+\lambda_r^{(i)}(\tau)))d\tau \right |\leq\Delta+\upsilon t\quad (t\in\R_0^{+}),$$
with $\Delta\geq0$ and $\upsilon=\upsilon_{\varepsilon}$, defined by
\begin{equation}
\label{cotadependientedeepsilon}
\upsilon=\max\left \{2(\varepsilon+N+\xi),2(\varepsilon-\bar{N}-\bar{\xi})\right \}.
\end{equation}

Due to the definition of $c_r^{(i)}$ and $\lambda_r^{(i)},$ we know that they are piecewise continuous. Consequently, there exist continuous functions $\bar{c}_r^{(i)}$, $\bar{\lambda}_r^{(i)}$ satisfying

\begin{equation}
\label{funcionescontinuas}
a_i\leq \bar{c}_r^{(i)}(t)\leq b_i  \quad\text{and}\quad |\bar{\lambda}_r^{(i)}(t)|\leq M_{\delta} \quad \text{for any $t\in\R_0^{+}$}
\end{equation}
and
$$\int_{0}^{t}\left |(c_r^{(i)}(\tau)+\lambda_r^{(i)}(\tau))-(\bar{c}_r^{(i)}(\tau)+\bar{\lambda}_r^{(i)}(\tau))d\tau \right |\leq 1,$$
thus we deduce that
$$\left |\int_{0}^{t}[d_{rr}^{(i)}(\tau)-(\bar{c}_r^{(i)}(\tau)+\bar{\lambda}_r^{(i)}(\tau))]d\tau \right |\leq \bar{\Delta}+\upsilon t$$
with $\bar{\Delta}=\Delta+1$.

As a consequence of this result, we construct the $n_i\times n_i$ matrix:
$$L_i(t)=Diag(\mu_1(t),\dots,\mu_{n_i}(t)),$$
where for any $r\in \left \{1,\dots,n_i\right \}$, $\mu_r$ are defined by
$$\mu_r(t)=\exp\left (\int_{0}^{t}(d_{rr}^{(i)}(\tau)-(\bar{c}_r^{(i)}(\tau)+\bar{\lambda}_r^{(i)}(\tau))d\tau\right ),$$
and we conclude that $$\left \|L_i(t)\right \|\leq\Omega\exp(\upsilon t)\quad and \quad \left \|L_i^{-1}(t)\right \|\leq\Omega\exp(\upsilon t)\quad \text{for any}\quad t\in\R_0^{+},$$
with $\Omega=\exp(\bar{\Delta})$.

\vspace{0.3 cm}

 \textit{Step 4):} \textit{The systems (\ref{bloque_i}) can be nonuniformly contracted to $[a_i,b_i]$, for any $i\in\left \{1,\dots, m\right \}$:}
The system (\ref{bloquetriangular_i}) is nonuniformly kinematically similar to
\begin{equation}
\label{bloquelambda_i}
\dot{z}_i=\Lambda_i(t)z_i,
\end{equation}
with $y_i(t)=L_i(t)z_i(t)$, where $\Lambda_i(t)=L_i^{-1}(t)D_i(t)L_i(t)-L_i^{-1}(t)\dot{L}_i(t)$ is a $n_i\times n_i$ matrix whose rs-coefficient is defined by
\begin{equation*}
\left \{\Lambda_i(t)\right \}_{rs}=
\left\{
\begin{array}{rcl}
\bar{c}_r^{(i)}(t)+\bar{\lambda}_r^{(i)}(t)\quad& \text{if}\quad &r=s,\\
\frac{\mu_s(t)}{\mu_r(t)}d_{rs}^{(i)}(t) \quad& \text{if}\quad& 1\leq r<s\leq n_i,\\
0 \quad& \text{if}\quad& 1\leq s<r\leq n_i.
\end{array}
\right.
\end{equation*}


We observe that $|d_{rs}^{(i)}(t)|\leq \mathcal{K}_1\exp(\kappa_1 t)$ with $\mathcal{K}_1>0$, for $1\leq r<s\leq n_i$ and by the equation (\ref{cotadependientedeepsilon}), we have $\frac{\mu_s(t)}{\mu_r(t)}\leq \mathcal{K}_2\exp(\kappa_2t)$, with $\mathcal{K}_2>1$ and $\kappa_2=2\upsilon$ with $\upsilon$ as in (\ref{cotadependientedeepsilon}), then
\begin{equation}
\label{estimaciongamma_i}
\left |\left \{\Lambda_{i}(t)\right \}_{rs}\right | \leq \mathcal{K}_2 \mathcal{K}_1\exp(\kappa t),
\end{equation}
where $\kappa=\kappa_{\varepsilon}=\kappa_1+\kappa_2$.

Let us define the transformation
$$z_i(t)=R_i(t)w_i(t),$$
with
$$R_i(t)=Diag(\exp(-\kappa M_{\delta}K_{\delta}t),\eta\exp(-2\kappa M_{\delta}K_{\delta}t),\dots,
\eta^{n_i-1}\exp(-n_i \kappa M_{\delta}K_{\delta}t)),$$
and we also define $K_{\delta}$ such that $K_{\delta}M_{\delta}\geq1$ and
\begin{equation}
\label{eta}
0<\eta<\frac{M_{\delta}}{M_{\delta}+ \mathcal{K}_1 \mathcal{K}_2}. 
\end{equation}

Now, we can see that (\ref{bloquelambda_i}) and (\ref{bloquetriangular_i}) are $\delta$-nonuniformly kinematically similar to

$$\dot{w}_i=\Gamma_i(t)w_i,$$
where the $rs$-coefficient of $\Gamma_i(t)$ is

\begin{equation*}
\left \{\Gamma_i(t)\right \}_{rs}=
\left\{
\begin{array}{rcl}
\left \{\Lambda_i(t)\right \}_{rs}+r \kappa M_{\delta}K_{\delta}\quad& \text{if}\quad& r=s,\\
\eta^{s-r}\left \{\Lambda_i(t)\right \}_{rs}
\exp(-(s-r) \kappa M_{\delta}K_{\delta}t)\quad& \text{if}\quad& 1\leq r<s\leq n_i,\\
0 \quad& \text{if}\quad& 1\leq s<r\leq n_i.
\end{array}
\right.
\end{equation*}

Let us observe that $\Gamma_i(t)$ can be written as follows:

$$\Gamma_i(t)=\bar{C}_i(t)I+\bar{B}_i(t),$$

where $\bar{C}_i(t)=Diag(\bar{c}_1(t),\dots,\bar{c}_{n_i}(t))$ and the $rs$-coefficient of $\bar{B}_i(t)$ is defined by
\begin{equation*}
\left \{\bar{B}_i(t)\right \}_{rs}=
\left\{
\begin{array}{rcl}
\left \{\bar{\lambda}_r(t)\right \}_{rs}+r \kappa M_{\delta}K_{\delta}\quad& \text{if}\quad& r=s,\\
\eta^{s-r}\frac{\mu_s(t)}{\mu_r(t)}d_{rs}^{(i)}(t)
\exp(-(s-r) \kappa M_{\delta}K_{\delta}t)\quad& \text{if}\quad& 1\leq r<s\leq n_i,\\
0 \quad& \text{if}\quad& 1\leq s<r\leq n_i.
\end{array}
\right.
\end{equation*}



By (\ref{funcionescontinuas}) and (\ref{estimaciongamma_i}), we can verify that

\begin{equation*}
\left \|\bar{B}_i(t)\right \|\leq M_{\delta}[1+r \kappa K_{\delta}]+ \mathcal{K}_1 \mathcal{K}_2[\eta+\eta^2+\cdots+\eta^{n_i}]\leq M_{\delta}[1+n_i \kappa K_{\delta}]+ \mathcal{K}_1 \mathcal{K}_2\frac{\eta}{1-\eta}.
\end{equation*}

Recall that $M_{\delta}=\frac{\delta}{m}$ and by using (\ref{eta}) it follows that $\left \|\bar{B}_i(t)\right \|\leq\frac{\delta}{m}K_{\delta,\varepsilon}$, where $K_{\delta,\varepsilon}=2+n_i \kappa K_{\delta}$.

Thus, for any $i \in \lbrace 1, \dots , m \rbrace$ the system  (\ref{bloquetriangular_i}) is $\bar{\delta}$-nonuniformly kinematically similar (with $\bar{\delta}=\frac{\delta}{m}$) to

\begin{equation*}
\label{diagonal+perturbado_i}
\dot{w}_i=[\bar{C}_i(t)+\bar{B}_i(t)]w_i,
\end{equation*}
where
$$\bar{c}_j(t)\in[a_i,b_i]=\Sigma(B_i), \quad j \in \lbrace 1, \dots , n_i \rbrace \quad\text{and}\quad  \left \|\bar{B}_i(t)\right \|\leq\frac{\delta}{m}K_{\delta,\varepsilon}.$$

Finally, (\ref{bloquetriangular_i}) is nonuniformly contracted to $\Sigma(B_i)$.

\vspace{0.3 cm}

\textit{Step 5):} \textit{The system (\ref{lin}) can be nonuniformly contracted to $\Sigma(A)$:} By using the previous result, we can see that (\ref{lin}) is $\delta$-nonuniformly kinematically similar to

\begin{equation*}
\label{diagonal+perturbado}
\dot{w}=[C(t)+ B(t)]w,
\end{equation*}
where
$$C(t)=Diag(\bar{C}_1(t),\dots, \bar{C}_m(t))\quad\text{and}\quad B(t)=Diag(\bar{B}_1(t),\dots,\bar{B}_m(t)).$$

In consequence, note that

$$C(t)\subset\bigcup_{i=1}^{m}[ a_i,b_i ]=\Sigma(A)\quad\text{and}\quad\left \| B(t)\right \|\leq\delta K_{\delta,\varepsilon}.$$

Finally, the system (\ref{lin}) is nonuniformly contracted to $\Sigma(A)$.

\qed


\begin{remark}
The inequalities \textnormal{(\ref{estimacionarriba})}
and \textnormal{(\ref{estimacionabajo})} show that the functions $\Phi(t,s)$ and $\Psi(t,s)$ are bounded by specific functions. This functions not necessarily cross to graph of functions $\Phi(t,s)$ and $\Psi(t,s)$, thus the conditions  \textnormal{\bf(C1)--(C4)} allows us to find straight lines that cross it at least once. Moreover this procedure enable us to construct  $\left \{ T_l^{(i)}\right \}_{l=0}^{+\infty}$, which is the sequence of first crossing times of the graph of the functions $\Phi(t,s)$ and $\Psi(t,s)$ with those straight lines. We have proved that this sequence of crossing time has not accumulations points.
\end{remark}

\section{Application of the main result}

In this section, we present two examples of scalar systems, with their respective spectra and, in addition, we will prove that there exists a Lyapunov transformation that allows each system is contracted to its spectrum.  Finally, we will present a diagonal planar example considering the previous scalar systems.
\begin{example} 
By one hand, let us consider the scalar differential equation studied in {\cite[p.547]{Chu}}\textnormal{:}
\begin{equation}
\label{ejemplo}
    \begin{array}{lccccc}
    \dot{x}=A_1(t)x,&\textnormal{with}&A_1(t)=\lambda_0 + at\sin(t)&\textnormal{and}&\lambda_0 <a<0.  
    \end{array}
\end{equation}

It is straightforward to verify that the example can be adapted to the case $\R_0^{+}$ and therefore the spectrum of \textnormal{(\ref{ejemplo})} is $\Sigma(A_1)=[\lambda_0+a,\lambda_0-a].$

We claim that \textnormal{(\ref{ejemplo})} is nonuniformly contracted to $\Sigma(A)$. Indeed, given a fixed $\delta>0$ and $\varepsilon_1=2|a|$, we consider the matrix function $t\rightarrow S_1(t)\in M_1(\R)$ defined by
$$S_1(t)=\exp \left(\frac{\varepsilon_1}{2} t\cos(t)-\delta\sin(t)\right),$$
and we can verify that \textnormal{(\ref{ejemplo})} is $\delta$-nonuniformly kinematically similar to
\begin{equation*} 
\begin{array}{ccccc}
\dot{y}=(C_1(t)+B_1(t))y, &\textnormal{with}&C_1(t)=\lambda_0&\textnormal{and}&B_1(t)=-\delta\cos(t)\left(1+\frac{\varepsilon_1}{2\delta}\right).
\end{array}
\end{equation*}

The result is followed as $C_1(t)\in [\lambda_0+a,\lambda_0-a]$ and $\left \|B_1(t) \right \|\leq \delta K_{\delta,\varepsilon_1}$, where $K_{\delta,\varepsilon}=1+\frac{\varepsilon_1}{2\delta}$.

On the other hand, let us consider the scalar differential equation shown in \cite{Zhu}\textnormal{:}
\begin{equation}
\label{ejemplo2}
    \dot{x}=A_2(t)x,\quad \textnormal{where} \quad A_2(t)=\lambda_1(\sin(\ln(t+1))+\cos(\ln(t+1)))
    \end{equation}
with $\lambda_1\neq0.$
After calculations we can see that the spectrum of \textnormal{(\ref{ejemplo2})} is $$\Sigma(A_2)=[-|\lambda_1|,|\lambda_1|].$$

Now we show that \textnormal{(\ref{ejemplo2})} is nonuniformly contracted to $\Sigma(A_2)$. Indeed, given a fixed $\delta>0$ and $\varepsilon_2=2\lambda_1$, we consider the matrix function $t\rightarrow S_2(t)\in M_1(\R)$ defined by $$S_2(t)=\exp\left(\frac{1}{2}\delta[(t+1)\sin(\ln(t+1))+(t+1)\cos(\ln(t+1))]\right ),$$ and we can verify that \textnormal{(\ref{ejemplo2})} is $\delta$-nonuniformly kinematically similar to
\begin{equation*} 
\dot{y}=(C_2(t)+B_2(t))y, 
\end{equation*}
with $C_2(t)=\lambda_1\sin(\ln(t+1))$ and $B_2(t)=\lambda_1\cos(\ln(t+1))+\delta\cos(\ln(t+1))$.

Finally, since $C_2(t)\in [-|\lambda_1|,|\lambda_1|]$ and $\left \|B_2(t) \right \|\leq \delta K_{\delta,\varepsilon_2}$, where $K_{\delta,\varepsilon_2}=1+\frac{\varepsilon_2}{2\delta}$; we obtain that \textnormal{(\ref{ejemplo2})} is contracted to 
$\Sigma(A_2)$.
\end{example}

\begin{example}
Consider the planar system defined on $\R_0^{+}$ as follows
\begin{equation}
\label{ejemploplanar}
\dot{x}=A(t)x,
\end{equation}
where 
$$A(t)=\begin{pmatrix}
\lambda_0 + at\sin t & 0\\ 
0 & \lambda_1(\sin(\ln(t+1))+\cos(\ln(t+1)))
\end{pmatrix}$$
with the same conditions as in the example 1 and also $\lambda_0-a<-|\lambda_1|$. We can see that in view of the lemma \ref{lemaespectrodiagonal}, the nonuniform spectrum of \textnormal{(\ref{ejemploplanar})} is $$\Sigma(A)=[\lambda_0+a,\lambda_0-a]\cup[-|\lambda_1|,|\lambda_1|].$$

Now we claim that (\ref{ejemploplanar}) is nonuniformly contracted to $\Sigma(A)$. Indeed, given a fixed $\delta>0$ and $\varepsilon=\max\left \{\varepsilon_1,\varepsilon_2\right \}$, we consider the matrix function $t\rightarrow S(t)\in M_2(\R)$ defined by 
$$S(t)=\begin{pmatrix}
S_1(t)&0\\
0&S_2(t)
\end{pmatrix},$$
where $S_1(t)$ and $S_2(t)$ are as in the previous examples. It is verified that \textnormal{(\ref{ejemploplanar})} is $\delta$-nonuniformly kinematically similar to 
\begin{equation*} 
\dot{y}=(C(t)+B(t))y, 
\end{equation*}
with 
$$C(t)=\begin{pmatrix}
\lambda_0 & 0\\
0 & \lambda_1\sin(\ln(t+1))
\end{pmatrix}$$
and 
$$B(t)=\begin{pmatrix}
-\delta\cos(t)\left (1+\frac{\varepsilon_1}{2\delta} \right ) & 0 \\
0 & \lambda_1\cos(\ln(t+1))+\delta\cos(\ln(t+1))
\end{pmatrix}.$$

The claim follows since $C(t)\in\Sigma(A)$ and $\left \| B(t)\right \|\leq\delta K_{\delta,\varepsilon}$, where

$$K_{\delta,\varepsilon}=\max\left \{K_{\delta,\varepsilon_1},K_{\delta,\varepsilon_2}\right \}.$$

\end{example}

\section{Conclusions}
This paper can be seen as a progress report on nonuniform hyperbolicity and its associated spectrum.  In particular, we have introduced the concepts of nonuniform almost reducibility and nonuniform contractibility in order to proved that if a linear nonautonomous differential system verifies a subtle hypothesis of nonuniform hyperbolicity on the half line, then this system is nonuniformly contracted to the spectrum of nonuniform exponential dichotomy.  Additionally, it is an advance in the subject of nonautonomous differential equations in terms of the obtain a simpler form for a linear system with nonuniform hyperbolicity.

We point out that in \cite{Huerta} it has used strongly the main result of this article for prove the topological equivalence in the half line between a linear system with nonuniform contraction and an unbounded nonlinear perturbation.

\end{document}